\theoremstyle{plain}
\newtheorem{theorem}{Theorem}[section]
\newtheorem{lemma}[theorem]{Lemma}
\theoremstyle{definition}
\newtheorem{definition}[theorem]{Definition}
\newtheorem*{example}{Example}
\newtheorem{remark}{Remark}
\newtheorem{corollary}{Corollary}
\newtheorem{assumption}{Assumption}
\newcommand{\algocf@resetlines}{\relax}
\begin{document}

\begin{frontmatter}
\title{NON ASYMPTOTIC MIXING TIME ANALYSIS \\  OF NON-REVERSIBLE MARKOV CHAINS}


\author{\fnms{Muhammad Abdullah}~\snm{Naeem}%
  \thanks{For any suggestions, please contact me via 
  \texttt{muhammadabdullahnaeem@gmail.com} or 
  \texttt{abdullah\_naeem\_waraich@outlook.com}.}}

\begin{abstract}
We introduce a unified operator-theoretic framework for analyzing mixing times of finite-state ergodic Markov chains that applies to both reversible and non-reversible dynamics. The central object in our analysis is the projected transition operator $PU_{\perp 1}$, where $P$ is the transition kernel and $U_{\perp 1}$ is orthogonal projection onto mean-zero subspace in $\ell^{2}(\pi)$, where $\pi$ is the stationary distribution.  We show that explicitly computable matrix norms of $(PU_{\perp 1})^k$ gives non-asymptotic mixing times/distance to stationarity, and bound autocorrelations at lag $k$. We establish, for the first time, submultiplicativity of pointwise chi-squared divergence in the general non-reversible case. Leveraging upon non-asymptotic decay of powers of stable transition matrices in \cite{naeem2023spectral}, we provide for all times $\chi^{2}(k)$ bounds based on the spectrum of $PU_{\perp 1}$, i.e., magnitude of its distinct non-zero eigenvalues, discrepancy between their algebraic and geometric multiplicities, condition number of a similarity transform, and constant coming from smallest atom of stationary distribution(all  scientifically computable). Furthermore, for diagonalizable $PU_{\perp 1}$, we provide explict constants satisfying hypocoercivity phenomenon\cite{villani2006hypocoercivity} for discrete time Markov Chains. Our framework enables direct computation of convergence bounds for challenging non-reversible chains, including momentum-based samplers for V-shaped distributions. We provide the sharpest known bounds for non-reversible walk on triangle( \cite{choi2020metropolis,montenegro2006mathematical}). Our results combined with simple regression reveals a fundamental insight into momentum samplers: although for uniform distributions, $n\log{n}$ iterations suffice for $\chi^{2}$ mixing, for V-shaped distributions they remain diffusive as $n^{1.969}\log{n^{1.956}}$ iterations are sufficient. The framework also provides new insights into autocorrelation structure and concentration of ergodic averages, by showing that for ergodic chains relaxation times introduced in \cite{chatterjee2023spectral} $\tau_{rel}=\|\sum_{k=0}^{\infty}P^{k}U_{\perp 1}\|_{\ell^{2}(\pi)}$. 
\end{abstract}

\end{frontmatter}

\section{Introduction}
Markov chains are central to probability, statistics and computational science: their long-time behaviour underpins Markov Chain Monte Carlo (MCMC) algorithms, network dynamics, and stochastic models in physics and biology. For reversible kernels the spectral theory of self-adjoint operators gives powerful, quantitative control of convergence to stationarity via spectral gaps; for non-reversible kernels, however, the situation is substantially more delicate. Non-normality can produce initial transient growth of operator norms and existing asymptotic analysis based on Gelfands' formula hides the mechanisms controlling finite-time convergence. Furthermore, classical spectral gap-based bounds can be loose or misleading. Understanding non-reversible dynamics is therefore both theoretically challenging and practically important given the recent interest in non-reversible samplers and lifted dynamics that can outperform reversible schemes in many settings \citep{diaconis2000analysis, eberle2025convergence}.
\paragraph{Motivation.}
Non-reversible dynamics appear naturally in dynamical systems, statistical physics and have been deliberately introduced in Markov Chain Monte Carlo (MCMC) to reduce random walk behaviour \cite{diaconis2000analysis,chen1999lifting}.  
Early work by \citet{Neal2004} showed that breaking detailed balance can strictly improve the asymptotic variance of MCMC estimators.  
Subsequent developments in ``lifting'' and ``momentum'' samplers (see the survey \citep{Vucelja2014}) introduced auxiliary variables to induce directed motion. Empirical and theoretical analyses confirm that such schemes can outperform reversible ones in many regimes. 
  
\paragraph{Main idea of the paper.}
To understand convergence rates of a Markov chain to its stationary distribution, it is instructive to begin with convergence in the total variation metric. 
For $\epsilon>0$, define the total variation mixing time as
\[
\tau_{\mathrm{TV}}(\epsilon)
:= \inf \{\,k > 0 : \|p^{k}(x,\cdot) - \pi\|_{\mathrm{TV}} < \epsilon,\ \forall x \in \mathcal{X}\,\}.
\]
Intuitively, this quantifies how fast all rows of $P^{k}$ approach the stationary distribution vector $\pi \in \mathbb{R}^{n}$. In the finite-state setting, exact mixing behavior can be characterized through matrix norms. 
Consider the $P$-invariant subspace orthogonal to constant functions in $\ell^{2}(\pi)$,
\[
V_{\perp 1} = \{ f \in \ell^{2}(\pi) : \langle f, \mathbf{1} \rangle_{\pi} = 0 \},
\]
and let $U_{\perp 1} := I_{n} - \mathbf{1}\pi^{\top}$ be the projection onto $V_{\perp 1}$.
Then
\[
P^{k} U_{\perp 1} = P^{k} - \mathbf{1}\pi^{\top},
\]
and the $\ell_{1}$-norm of each row of $P^{k} U_{\perp 1}$ equals
$2\|p^{k}(x,\cdot) - \pi\|_{\mathrm{TV}}$ for the corresponding state $x$.
Hence total variation convergence can equivalently be written as
\[
\tau_{\mathrm{TV}}(\epsilon)
 = \inf\{\,k : \|P^{k}U_{\perp 1}\|_{\infty} < 2\epsilon\,\}.
\]

More generally, the decay rate of the projected operator $\|P^{k}U_{\perp 1}\|$
dictates the chain’s convergence to equilibrium:
the $\ell^{2}(\pi)$-operator norm controls $\chi^{2}$ convergence, while
the $\ell_{\infty}$-operator norm controls total variation convergence.
After introducing suitable similarity transforms, we show that these operator norms can be computed explicitly, yielding non-asymptotic convergence rates to stationarity.

\paragraph{Our contribution.}
This paper develops an operator-theoretic framework that provides explicit, computable bounds for the convergence of finite non-reversible Markov chains.  
The central object controlling finite-time convergence in $\ell^{2}(\pi)$ and $\chi^{2}$ sense is $\|P^{k} U_{\perp 1}\|_{\ell^{2}(\pi)}$, by using similarity transformation we reduce convergence analysis to spectral norm of $\|D_{\pi}^{\tfrac{1}{2}}P^{k}U_{\perp 1}D_{\pi}^{-\tfrac{1}{2}}\|_{\ell^{2}(\mathbb{R}^{n})}$, where $D_{\pi}=\mathrm{diag}(\pi_{1},\ldots,\pi_{n})$(assuming strictly positive support).
Within this setting we derive:
\begin{enumerate}
  \item A direct identification between $\ell^{2}(\pi)$-distance to stationarity at time $k$,spectral norm of projected similar matrix and and second largest singular value of transition kernel:
  \[\|p^{k}-\pi\|_{\ell^{2}(\pi)}=\sigma_{2}(P^{k})=\|D_{\pi}^{\tfrac{1}{2}}P^{k}U_{\perp 1}D_{\pi}^{-\tfrac{1}{2}}\|_{\ell^{2}(\mathbb{R}^{n})}.\]
  \item A \emph{pointwise} $\chi^{2}$ submultiplicativity inequality 
        \[
        \chi^{2}_{i}(k+t)\leq \sigma_{2}^{2}(P^{t})\,\chi^{2}_{i}(k),
        \]
        giving sharp finite-time control of convergence;
  \item Convergence rate for all $k$ via \emph{spectral properties of a single matrix} $D_{\pi}^{\tfrac{1}{2}}PU_{\perp 1}D_{\pi}^{-\tfrac{1}{2}}$: If $P \in \mathbb{R}^{n \times n}$ and $\{\lambda_{i}\}_{i \in [K]}$ (where $K\leq n-1$) be distinct non-zero eigenvalues of $PU_{\perp 1}$. \emph{Discrepancy} of eigenvalue $\lambda_{i}$ is defined as the difference between its' \emph{algebraic multiplicty} and \emph{geometric multiplicity} and denoted by $D_{\lambda_{i}}$. Let $D_{\pi}^{\frac{1}{2}}PU_{\perp1}D_{\pi}^{-\frac{1}{2}}=SJ_{\perp1}S^{-1}$ be its Jordan canonical form decomposition. Then for all $k \in \mathbb{N}$:
    \begin{align}
        & \nonumber\chi^{2}(k) \leq \left(\frac{1-\pi_{\min}}{\pi_{\min}}\right) \kappa(S)^2 \max_{1 \leq i \leq K} \left[ k^{2D_{\lambda_{i}}} |\lambda_{i}|^{2k} \left( \frac{1-|\lambda_{i}|}{1-|\lambda_{i}|^{D_{\lambda_{i}}+1}}\right)^{2} \right],
    \end{align}
    where $\kappa(S) = \|S\|_{\ell^{2}(\mathbb{R}^{n})}\|S^{-1}\|_{\ell^{2}(\mathbb{R}^{n})}$ is the condition number of the similarity transformation and $\pi_{\min}$ is the probability mass of smallest atom in $\pi$.       
  \item Explicit \emph{hypocoercivity constants} for diagonalizable dynamics:
  \[\|p^{k}-\pi\|_{\ell^{2}(\pi)} \leq \kappa(S)\rho^{k}(D_{\pi}^{\frac{1}{2}}PU_{\perp1}D_{\pi}^{-\frac{1}{2}}),\] where $\rho(D_{\pi}^{\frac{1}{2}}PU_{\perp1}D_{\pi}^{-\frac{1}{2}})$ is used to denote \emph{spectral radius} of projected and conjugated transition matrix and is strictly less than $1$ under ergodicity assumption.
  \item For ergodic Markov chains \emph{relaxation time} recently defined by \cite{chatterjee2023spectral} to compute the concentration of ergodic averages, fundamentally captures the cumulative effect of autocorrelations across all time lags $k$ and can be explicitly computed as:
   \begin{align}
       & \nonumber \Bigg\|D_{\pi}^{\frac{1}{2}}\Big(\sum_{k=0}^{\infty}P^{k}\Big)U_{\perp 1}D_{\pi}^{-\frac{1}{2}}\Bigg\|_{\ell^{2}(\mathbb{R}^{n})}=\tau_{rel}.
   \end{align}
\end{enumerate}
These results yield finite-time, non-asymptotic mixing bounds with constants that can be computed directly from the transition matrix. Consequently we were able to derive: best known bound for $\chi^{2}$ distance to stationarity for three-state non-reversible Markov Chain from \cite{choi2017analysis}. Although $n\log{n}$ samples suffices for $\chi^{2}$ convergence of momentum based sampler for uniform distribution (as shown in \cite{diaconis2000analysis}), \emph{momentum based sampler is diffusive for \emph{V-shaped distribution}} on $n-$ points of the form $\pi(x) \propto 2|x-\tfrac{n}{2}|+2$ as shown in Table \ref{tab:asymp_conv_rates}.   \begin{remark}
    Momentum based samplers for uniform distribution and $\mathrm{V}$- shaped distribution are diagonalizable, so spectral radius of projected conjugated matrix, combined with $\pi_{\min}$ estimates leads to $\chi^{2}(k)$ bounds in Table \ref{tab:asymp_conv_rates} \emph{modulo condition number of similarity}.  
\end{remark}   

\begin{table}[H]
\centering
\begin{tabular}{|l|c|}
\hline
\textbf{Markov Chains} & \textbf{Convergence Rates} \\ 
\hline
\textit{Momentum based sampler for uniform distribution on $n$ points with $\theta=\frac{1}{n}$} & $\chi^{2}(k) \leq (2n-1)\mathrm{O}\Big(1-\frac{1}{n}\Big)^{2k}$ \\ 
\hline
\textit{Momentum based sampler for $\pi(x)\propto 2|x-\frac{n}{2}|+2$, $n$ even and $\theta=\frac{1}{n}$ } & $\chi^{2}(k) \leq (Cn^{\beta}-1) \mathrm{O}\Big(1-\frac{c}{n^{\alpha}}\Big)^{2k}$ \\ & $c=6.35,\alpha=1.962, C=0.638,\beta=1.956$ \\ 
\hline
\textit{Momentum based sampler for $\pi(x)\propto 2|x-\frac{n}{2}|+2$, $n$ odd and $\theta=\frac{1}{n}$ } & $\chi^{2}(k) \leq (Cn^{\beta}-1)\mathrm{O}\Big(1-\frac{c}{n^{\alpha}}\Big)^{2k}$ \\ & $c=9.83,\alpha=1.969,C=1.290,\beta=1.956$ \\ 
\hline
\textit{Three state non-reversible chain(no spectral gap)} & $\|p^{k}-\pi\|_{TV} \leq \mathrm{O}\Big(\sqrt{\tfrac{1}{2}}^{k}\Big)$ \\ 
\hline
\end{tabular}
\caption{Iteration and dimensionality dependent convergence rates of different Markov Chains.}
\label{tab:asymp_conv_rates}
\end{table}

\paragraph{Existing analyses on non-reversible chains}
While there exists substantial literature on non-reversible MCMC and lifting methods 
\cite{Neal2004,Vucelja2014}, 
most results concern asymptotic variance reduction or specific model families. Initial progress in understanding mixing times of non-reversible chains was made through various notions of reversibilization and alternative spectral gaps \citep{choi2017analysis, fill1991eigenvalue, paulin2015concentration,HuangMao2017}. These approaches, while valuable, often rely on heuristics for asymptotic convergence rates by constructing reversible versions of non-reversible chains or employing techniques that may not fully leverage the specific structure of non-reversible dynamics. A fundamental challenge has been the lack of computable methods for determining key quantities such as $\|p^k - \pi\|_{\ell^2(\pi)}$ for all times $k$ which controls the decay of autocorrelations and statistical efficiency of MCMC estimators. More recently \citep{ChoiPatie2020} offer valuable insight but provided bounds on $\|p^k - \pi\|_{\ell^2(\pi)}$ are only valid for diagonalizable dynamics as full spectral structure of non-Hermitian operators based on discrepancy between algebraic and geometric multiplicities of eigenvalues is not considered.
\paragraph{Fundamental limitaton}
In summary: most classical bounds fail to describe the transient regime of non-reversible chains.
Let $\lambda_{*} := \sup\{|\lambda| : \lambda \in \mathrm{spec}(P)\setminus\{1\}\}$($\mathrm{spec}(P)$ denotes the spectrum of $P$ i.e., its' eigenvalues) 
be the second largest eigenvalue in modulus, and let $\sigma_{2}(P)$ 
denote the second largest singular value of $P$ in $\ell^{2}(\pi)$ sense.
A common but naive approach bounds $\|P^{k}U_{\perp 1}\|_{\ell^{2}(\pi)}$
by $\lambda_{*}^{k}$ or $\sigma_{2}^{k}(P)$.
However, for non-self-adjoint $P$, the two quantities can differ drastically from $\|P^{k}U_{\perp 1}\|_{\ell^{2}(\pi)}$.
While ergodicity ensures $\rho(PU_{\perp 1})<1$, i.e., \emph{spectral radius} of projected transition kernel is strictly less than one and hence
$\|P^{k}U_{\perp 1}\|_{\ell^{2}(\pi)} \to 0$ as $k \to \infty$,
it need not be true that
$\|P^{k}U_{\perp 1}\|_{\ell^{2}(\pi)} = \|PU_{\perp 1}\|_{\ell^{2}(\pi)}^{k}$,
and indeed $\|P^{k}U_{\perp 1}\|_{\ell^{2}(\pi)}$ may be $1$ for small~$k$.
Thus bounds of the form $\|P^{k}U_{\perp 1}\|_{\ell^{2}(\pi)} \leq \sigma_{2}^{k}(P)$, in case of no spectral gap but ergodic chain, is essentially saying $\|P^{k}U_{\perp 1}\|_{\ell^{2}(\pi)}\leq 1$ as you will see such an example in remark \ref{rmk:three_state_non_rev}. Moreover, any bound of the form $\|P^{k}U_{\perp 1}\|_{\ell^{2}(\pi)} \leq  \kappa(S)\lambda_{*}^{k}$ is invalid for non-diagonalizable dynamics: consider the $3\times3$ transition matrix 
\( 
P=\begin{pmatrix}
0 & 1 & 0\\
0 & 0 & 1\\
\lambda^{2} & -(2\lambda+\lambda^{2}) & 1+2\lambda
\end{pmatrix}
\)
with $\lambda=-\tfrac14$, which has eigenvalues $1,\lambda,\lambda$. Moreover, eigenvalue $\lambda$ has algebraic multiplicity of ~$2$ and geometric multiplicity~$1$.  
Let $U_{\perp1}=I-\mathbf{1}\pi^{\top}$ denote the projection onto the orthogonal complement of the stationary distribution~$\pi=[0.04,0.32,0.64]$. So $\rho(PU_{\perp 1})=\frac{1}{4}=:\lambda_{*}$ and numerical computation gives $\|P^{k}U_{\perp1}\|_{\ell^{2}(\pi)}=
(0.5546,\,0.2374,\,0.0857,\,0.0282,\,0.0087,\,\dots),
\qquad k=1,\ldots,10.
$, deeming void exponential bound based on spectral radius for all $k$ and verifying our non-asymptotic bound based on discrepancy of $1$ for eigenvalue $\lambda$: $\|P^{k}U_{\perp 1}\|_{\ell^{2}(\pi)}\leq \kappa(S)k|\lambda|^{k}.$

\paragraph{Paper Structure}
Section \ref{sec:prelim} introduces the preliminary concepts and develop operator-theoretic framework necessary for analysis of non-reversible Markov Chains. In Section \ref{sec:main_theory_fwk} presents our main theoretical framework, including characterizations of various mixing times, computational methods, and connections to relaxation times. Section \ref{sec:case_studies} demonstrates the application of our framework through several case studies, highlighting its computational advantages and concluding the results in Table \ref{tab:asymp_conv_rates}. 

\section{Preliminaries and Operator Setup}
\label{sec:prelim}
For a distribution $\pi$ on an $n-$ dimensional space, there often exists different transition kernels(Markov chains) that converge to the steady state distribution $\pi$. We are interested in analyzing how spectral properties of kernel $P$ affect the mixing time of the underlying chain, our analysis works regardless of reversibility assumption. As the aim of MCMC algorithms is convergence of the chain to stationary distribution, irrespective of initial conditions, this requires us to ensure:  
\begin{assumption}
 Stationary distribution has \emph{strictly positive support} on all individual atoms in the underlying space: i.e., $\pi_{i}>0, \forall i \in [n]$ and transition kernel $P$ is \emph{irreducible} and \emph{aperiodic} with $\pi$ being its' stationary distribution.
\end{assumption}
We can define a Hilbert space $\ell^{2}(\pi)$ of $n$-dimensional vector valued functions $f=[f_{1},\ldots,f_{n}]$ satisfying $\|f\|_{\ell^{2}(\pi)}:=\sqrt{\sum_{i=1}^{n}|f_{i}|^{2}\pi_{i}}<\infty$, for $f,g \in \ell^{2}(\pi)$ inner product is defined as $\langle f,g\rangle_{\pi}:=\sum_{i=1}^{n}f_{i}g_{i}\pi_{i}$.  Given $p(i,j)$, probability of going from state $i$ to $j$ for all $i,j \in [n]$ we can generate a matrix $P \in \mathbb{R}^{n \times n}$, its' element at row $i$ and column $j$ being $p(i,j)$. Let $\pi=[\pi_{1},\ldots,\pi_{n}]$ be the column vector in $\mathbb{R}^{n}$ corresponding to stationary distribution then $\pi^{T}P=\pi^{T}$: i.e., \emph{$\pi$ is $P$-invariant}. Keep in mind that $\ell^{2}(\mathbb{R}^{n})$ is defined similarly to $\ell^{2}(\pi)$, enforcing that $\pi_{i}=1$ for all $i \in [n]$. We denote column vector of ones by $\mathbf{1} \in \mathbb{R}^{n}$ and notice that $P\mathbf{1}=\mathbf{1}$: i.e., \emph{space of constant functions in $\ell^{2}(\pi)$ and $\ell^{2}(\mathbb{R}^{n})$ space is invariant under $P$}.         is a \emph{contraction} on $\ell^{2}(\pi)$ i.e., $\sup_{f:\|f\|_{\ell^{2}(\pi)}=1}\|Pf\|_{\ell^{2}(\pi)} \leq 1$,
\[
Pf=
\begin{bmatrix}
        p(1,1) & p(1,2) & \ldots &  p(1,n) \\    
        p(2,1) & p(2,2) & \ldots &  p(2,n) \\ 
        \vdots & \vdots & \ddots & \vdots  \\ 
        p(n,1) & p(n,2) & \ldots & p(n,n)   
\end{bmatrix}
\begin{bmatrix}
    f_{1} \\ f_{2} \\ \vdots \\ f_{n}
\end{bmatrix}
=:
\begin{bmatrix}
    Pf(1) \\ Pf(2) \\ \vdots \\ Pf(n),
\end{bmatrix}
\]
where $Pf(i)=\sum_{j=1}^{n}f_{j}p(i,j)$ for each $i \in [n]$ and
\begin{align}
\|Pf\|_{\ell^{2}(\pi)}^{2} & \nonumber =\sum_{i=1}^{n}|Pf(i)|^{2}\pi_{i}\\&\nonumber=\sum_{i=1}^{n}|\sum_{j=1}^{n}f_{j}p(i,j)|^{2}\pi_{i} \\ & \label{eq:jensen}\leq \sum_{i=1}^{n}\sum_{j=1}^{n}|f_{j}|^{2}p(i,j)\pi_{i}\\ & \nonumber=\sum_{i=1}^{n}\sum_{j=1}^{n}|f_{j}|^{2}p(i,j)\pi_{i} \\ & \nonumber=\sum_{j=1}^{n}|f_{j}|^{2}\sum_{i=1}^{n}p(i,j)\pi_{i} \\ & \label{eq:invar} =\sum_{j=1}^{n}|f_{j}|^{2}\pi_{j} \\ & \nonumber :=\|f\|_{\ell^{2}(\pi)}, 
\end{align}
where inequality \ref{eq:jensen} follows from Jensens' inequality and equality \ref{eq:invar} follows from the $\pi^{\top}P=P$. Given any $f,g \in \ell^{2}(\pi)$ adjoint $P^{*}$ of transition kernel in $\ell^{2}(\pi)$ is defined as to satisfy:  
\begin{align}
    \langle Pf,g\rangle_{\pi} = \langle f,P^{*}g\rangle_{\pi} \quad \forall f,g \in \ell^{2}(\pi)
\end{align}
Adjoint operator $P^{*}$ generates a time-reversed transition kernel with stationary distribution $\pi$ with probability of going from state $i$ to $j$ being $p^{^{*}}(j,i)=\frac{p(i,j)\pi_i}{\pi_j}$ and $P^{*}\mathbf{1}=\mathbf{1}$. If $P^{T}$ is the transpose of the matrix $P$ in euclidean sense, under the hypothesis of strictly positive support of stationary distribution, $P^{*}$ as a bounded operator on $\ell^{2}(\pi)$ has matrix representation:   
\begin{align}
    \label{eq:P_l2_adj}
    P^{*}=D_{\pi}^{-1}P^{T}D_{\pi}, \hspace{5pt}\textit{where} \quad 
    D_{\pi} &=\mathrm{diag}(\pi_{1},\ldots,\pi_{n}).
\end{align}
If $P$ is \emph{self-adjoint} w.r.t $\ell^{2}(\pi)$  then $P=D_{\pi}^{-1}P^{^{T}}D_{\pi}$. Under strictly positive support hypothesis, if $P$ is \emph{self-adjoint} w.r.t $\ell^{2}(\pi)$ then $P$ satisfies \emph{Detailed Balance}/\emph{reversibility} condition i.e., for all $i,j \in [n]: \hspace{3pt} \pi_{i}p(i,j)=\pi_{j}p(j,i)$, so reversibility, detailed balance and self-adjoint will be used interchangeably in this paper.

\begin{remark}[Properties of the projection operator $U_{\perp 1}$]
Define $U_{\perp 1} := I_{n} - \mathbf{1}\pi^{\top}$,
\begin{itemize}
    \item $U_{\perp 1}$ is a projection matrix in the algebraic sense, satisfying:
        \[
            U_{\perp 1}^{2} = (I_{n} - \mathbf{1}\pi^{\top})^{2}
            = I_{n} - 2\mathbf{1}\pi^{\top} + \mathbf{1}(\pi^{\top}\mathbf{1})\pi^{\top}
            = I_{n} - \mathbf{1}\pi^{\top} = U_{\perp 1},
        \]
    \item $U_{\perp 1}$ projects onto an $(n-1)$-dimensional
    subspace of $\mathbb{R}^{n}$:
        \[
            \mathrm{trace}(U_{\perp 1})= \mathrm{trace}(I_{n})-\mathrm{trace}(1\pi^{\top})=n-\sum_{i=1}^{n}\pi_{i}=(n-1).   
        \]
    
    \item Image of $U_{\perp 1}$ consists of all mean-zero vectors with respect to~$\pi$: 
    
        \[
            \forall{f} \in \mathbb{R}^{n}:U_{\perp 1}f = f - \pi(f)\mathbf{1},
            \qquad 
            \text{where } \pi(f) := \sum_{i=1}^{n}\pi_i f_i \implies \langle U_{\perp 1}f,1\rangle_{\pi}=0.
        \]
    Note: Conventionally, centered vector valued $f$ i.e., $f-\pi(f)\mathbf{1}$ is denoted by $f-\pi(f)$.  
    \item $U_{\perp 1}$ is self-adjoint with respect to the $\pi$-weighted inner product $\langle f,g\rangle_{\pi} = f^{\top}D_{\pi}g$: 
    \[
        U_{\perp 1}^{*}
        := D_{\pi}^{-1}U_{\perp 1}^{\top}D_{\pi}
        = D_{\pi}^{-1}(I - \pi\mathbf{1}^{\top})D_{\pi}
        = I - \mathbf{1}\pi^{\top}
        = U_{\perp 1}.
    \]
    Hence $U_{\perp 1}$ acts as the orthogonal projection onto the mean-zero subspace
    \[
        V_{\perp 1} := \{\,f \in \ell^{2}(\pi) : \langle f, \mathbf{1}\rangle_{\pi} = 0\,\}.
    \]
    Note: $U_{\perp 1}$ is not symmetric in euclidean sense, unless $\pi$ is uniform.
    \item Commutation with transition kernel:
    If $P$ is a Markov transition kernel satisfying $\pi^{\top}P = \pi^{\top}$,
    then mean-zero subspace $V_{\perp 1}$ is $P$-invariant, and
        \[
            PU_{\perp 1} = U_{\perp 1}P.
        \]
    Consequently,
        \[
            (PU_{\perp 1})^{k} = P^{k}U_{\perp 1}, \qquad \forall\, k \ge 1.
        \]
    This identity shows that projecting before or after $k$ transitions yields the same result on $\ell^{2}(\pi)$ and $\mathbb{R}^{n}$.    
\end{itemize}
In summary, $U_{\perp 1}$ is an idempotent matrix of rank $n-1$ in $\mathbb{R}^{n}$,
    and the orthogonal projector onto $V_{\perp 1}$ in $\ell^{2}(\pi)$.
\end{remark}
\begin{remark}
\label{rmk: sing_and_mat_norms}
It is important to refresh singular values and matrix norm of the transition kernel for all powers $k$, as they play an important role in analyzing convergence to stationarity of Markov Chain:  
    \begin{itemize}
        \item Operator norm of $P^{k}:\ell^{2}      (\pi)\rightarrow \ell^{2}(\pi)$ equals one:
            \begin{align}
            \label{eq:P_op_norm_one_ell_2}
            \|P^{k}\|_{\ell^{2}(\pi)}:=\max_{f\in \ell^{2}(\pi):\|f\|_{\ell^{2}(\pi)=1}}\|P^{k}f\|_{\ell^{2}(\pi)}=\|P^{k}\mathbf{1}\|_{\ell^{2}(\pi)}=\|\mathbf{1}\|_{\ell^{2}(\pi)}=1
            \end{align}
        \item Let $\lambda_{1}(P^{k}(P^{*})^{k})\geq \ldots \geq \lambda_{n}(P^{k}(P^{*})^{k})$ be the eigenvalues of $P^{k}(P^{*})^{k}:\ell^{2}(\pi)\rightarrow \ell^{2}(\pi)$ then singular values of transition kernel are defined as $\sigma_{i}(P^{k}):=\sqrt{\lambda_{i}(P^{k}(P^{*})^{k})}$ and have the following variational formulation(Theorem 3.1.1 of \cite{horn1994topics}):
            \begin{align}
            &\sigma_1(P^{k}) = \label{eq:sig_1_at_1} \max \{ \|P^{k}f\|_{\ell^{2}(\pi)} : f \in \ell^{2}(\pi), \|f\|_{\ell^{2}(\pi)} = 1 \}, \quad 
            \text{so } \sigma_1(P^{k}) = \|P^{k}\mathbf{1}\|_{\ell^{2}(\pi)}. \\    
            & \label{eq:sig_2_orth_1_sup} \sigma_2(P^{k}) = \max \{ \|P^{k}f\|_{\ell^{2}(\pi)} : f \perp \mathbf{1}, \|f\|_{\ell^{2}(\pi)} = 1 \}, \quad 
            \text{so } \sigma_2(P^{k}) = \|P^{k} w_2\|_{\ell^{2}(\pi)}
            \text{ for some } w_2 \in V_{\perp 1}. \\ & \nonumber \vdots \\  & \label{eq:sig_n_P_perp} \sigma_n(P^{k}) = \max \{ \|P^{k}f\|_{\ell^{2}(\pi)} : f \perp \mathbf{1}, \ldots, w_{n-1}, \|f\|_{\ell^{2}(\pi)} = 1\}.
            \end{align}        
    \end{itemize}
\end{remark}
\begin{definition}
Issues with conventional definition of spectral gap \cite{hairer2014spectral} A Markov operator $P$ with invariant measure $\pi$ has an $\ell^{2}(\pi)$ spectral gap $1-\beta$ if:
\begin{align}
\label{eq:ell_2}
\beta=\|PU_{\perp 1}\|_{\ell^{2}(\pi)}<1,   
\end{align}
even when spectral gap in $\ell^{2}(\pi)$ sense is $0$ but $\rho(P^{k}U_{\perp 1})<1$ it is possible that $\|P^{k}U_{\perp 1}\|_{\ell^{2}(\pi)} \rightarrow 0$ as discussed in following subsection  
\end{definition}

\subsection{Incompleteness of eigenvalue magnitude for non-asymptotic decay of $\|P^{k}U_{\perp 1}\|_{\ell^{2}(\pi)}$}
\begin{definition}
Given $A \in \mathbb{R}^{d \times d}$ and $x_{0} \in \mathbb{R}^{d}$ consider the following associated $d-$ dimensional linear dynamical system:
\[
x_{k+1}=Ax_{k},
\]
we say that linear dynamical system or matrix $A$ is \emph{stable}  if all the eigenvalues of $A$ are strictly ins unit circle i.e., spectral radius is stricly less than $1$ or mathematically said $\rho(A)<1$.     
\end{definition}
\begin{corollary}
    Stability implies that:
    \begin{align}
        & \nonumber \|A^{k}\|_{\ell^{2}(\mathbb{R}^{d})}\rightarrow 0, \quad \text{consequently}  \quad   \|x_{k}\|_{\ell^{2}(\mathbb{R}^{d})} \rightarrow 0.      
    \end{align}
\end{corollary}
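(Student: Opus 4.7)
The plan is to reduce the matrix to its Jordan canonical form and then estimate the norm of each Jordan block's powers. Writing $A = SJS^{-1}$ with $J$ block diagonal and Jordan blocks $J_i = \lambda_i I_{m_i} + N_{m_i}$ (where $N_{m_i}$ is the nilpotent shift of size $m_i$), one obtains $A^k = SJ^kS^{-1}$ and therefore
\[
\|A^k\|_{\ell^2(\mathbb{R}^d)} \leq \|S\|\,\|S^{-1}\|\,\|J^k\|_{\ell^2(\mathbb{R}^d)}
= \kappa(S)\max_i \|J_i^k\|_{\ell^2(\mathbb{R}^{m_i})}.
\]
This shifts the burden to controlling the powers of a single Jordan block.

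Next I would use the binomial expansion, valid because $\lambda_i I$ and $N_{m_i}$ commute:
\[
J_i^k = \sum_{j=0}^{m_i-1} \binom{k}{j} \lambda_i^{\,k-j} N_{m_i}^{j}.
\]
Since $\|N_{m_i}^{j}\|\le 1$ and $\binom{k}{j}\le k^{j}$, a term-by-term bound yields
\[
\|J_i^k\|_{\ell^2(\mathbb{R}^{m_i})} \leq \sum_{j=0}^{m_i-1} k^{j}\,|\lambda_i|^{\,k-j}
\leq C_i\, k^{m_i-1}\,|\lambda_i|^{k}
\]
for some constant $C_i$ and all sufficiently large $k$. By stability, $|\lambda_i|<1$ for every $i$, so the exponential factor $|\lambda_i|^{k}$ dominates any polynomial in $k$, giving $\|J_i^k\|\to 0$. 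Combining with the similarity bound yields $\|A^k\|_{\ell^2(\mathbb{R}^d)}\to 0$. The second conclusion is then an immediate consequence of submultiplicativity:
\[
\|x_k\|_{\ell^2(\mathbb{R}^d)} = \|A^k x_0\|_{\ell^2(\mathbb{R}^d)} \leq \|A^k\|_{\ell^2(\mathbb{R}^d)}\,\|x_0\|_{\ell^2(\mathbb{R}^d)} \longrightarrow 0.
\]

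The main technical step, and the one that deserves the most care, is the polynomial-versus-exponential comparison for non-diagonalizable blocks: one must verify that $k^{m_i-1}|\lambda_i|^{k}\to 0$ for $|\lambda_i|<1$, e.g.\ by observing that $\log(k^{m_i-1}|\lambda_i|^{k}) = (m_i-1)\log k + k\log|\lambda_i|\to -\infty$. An alternative, essentially equivalent, route would be to invoke Gelfand's spectral radius formula $\rho(A)=\lim_k \|A^k\|^{1/k}$ and choose any $r\in(\rho(A),1)$, so that $\|A^k\|\le r^k$ eventually; I prefer the Jordan form argument because it produces the explicit $\kappa(S)\,k^{m-1}|\lambda|^k$-type bound that foreshadows the non-asymptotic estimates used elsewhere in the paper (cf.\ the discrepancy-based bound stated in the introduction and the results of \cite{naeem2023spectral}).
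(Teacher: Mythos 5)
Your argument is correct and follows essentially the same route as the paper's proof: Jordan decomposition $A=SJS^{-1}$, the similarity bound $\|A^{k}\|\leq \kappa(S)\|J^{k}\|$, and the observation that $\rho(A)<1$ forces $\|J^{k}\|\to 0$. The only difference is that you fill in the polynomial-times-exponential estimate for a Jordan block, which the paper simply asserts.
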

\begin{proof}
    There exists a matrix $J$ comprising of Jordan blocks and a full rank matrix $S \in \mathbb{R}^{d \times d}$ such that: 
     \begin{align}
         & \nonumber A=SJS^{-1} \implies \|A^{k}\|_{\ell^{2}(\mathbb{R}^{d})} \leq \|S\|_{\ell^{2}(\mathbb{R}^{d})}\|S^{-1}\|_{\ell^{2}(\mathbb{R}^{d})}\|J^{k}\|_{\ell^{2}(\mathbb{R}^{d})}          
     \end{align}
     as long as $\rho(A)<1$, asymptotically $\|J^{k}\|_{\ell^{2}(\mathbb{R}^{d})}\rightarrow 0$
\end{proof}
\begin{remark}
    Ergodicity implies that $PU_{\perp 1}$ is a stable matrix as it contains all the eigenvalues of $P$ modulo $1$ and $0$.
\end{remark}
So non-asymptotic rate of convergence for $\|A^{k}\|_{\ell^{2}(\mathbb{R}^{d})}$ requires controlling $\|J^{k}\|_{\ell^{2}(\mathbb{R}^{d})}$ for all finite $k$ along with upper bounds on $\|S\|_{\ell^{2}(\mathbb{R}^{d})}\|S^{-1}\|_{\ell^{2}(\mathbb{R}^{d})}$. Controlling $\|J^{k}\|_{\ell^{2}(\mathbb{R}^{d})}$ requires knowledge of \emph{algebraic} and \emph{geometric} multilicity of eigenvalues of $A$. Position or magnitude of eigenvalues associated to a linear operator $A$ only provides partial information about its' properties (for the ease of exposition, throughout this paper we will assume that $A$ does not have any non-trivial null space). Roughly speaking, algebraic multiplicity of eigenvalues follow from chatacteristic polynomial of the matirx. 
\begin{equation}
    \label{eq:detcharpoly} det(zI-A)= \prod_{i=1}^{K} (z-\lambda_{i})^{m_i},
\end{equation}
where $\lambda_{i}$ are distinct with multiplicity $m_{i}$ and  $\sum_{i=1}^{K} m_i=d$. Since algebraic multiplicity of eigenvalue $\lambda_{i}$ is $m_{i}$ , we denote it by $AM(\lambda_{i})=m_{i}$. Similarly with each $\lambda_{i} \in \mathrm{spec}(A)$, their is an associated set of eigenvectors and dimension of their span corresponds to geometric multiplicity of $\lambda_{i}$ , which we denote by $GM(\lambda_{i})=dim[N(A-\lambda_{i}I)]$. Recall, from linear algebra:
\begin{lemma}
    Let $A \in \mathbb{R}^{d \times d}$. Then $A$ is diagonalizable iff there is a set of $d$ linearly independent vectors, each of which is an eigenvector of $A$. 
\end{lemma}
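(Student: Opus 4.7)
The plan is to prove the two directions of the biconditional by directly manipulating the defining matrix equation $A = SDS^{-1}$ and exhibiting eigenvector/eigenvalue relations column-by-column. Both directions reduce to reading off the identity $AS = SD$ in two different ways, so the argument is essentially bookkeeping about columns of $S$.

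First I would handle the forward direction. Assume $A$ is diagonalizable, so there exist an invertible $S \in \mathbb{R}^{d\times d}$ and a diagonal matrix $D = \mathrm{diag}(\lambda_1,\ldots,\lambda_d)$ with $A = SDS^{-1}$. Multiplying on the right by $S$ gives $AS = SD$. Writing $S = [v_1 \mid v_2 \mid \cdots \mid v_d]$ columnwise, the $i$-th column of this equation reads $Av_i = \lambda_i v_i$, so each $v_i$ is an eigenvector of $A$ with eigenvalue $\lambda_i$. Invertibility of $S$ is exactly the statement that the columns $v_1,\ldots,v_d$ are linearly independent, which produces the required set of $d$ linearly independent eigenvectors.

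For the converse, suppose one has eigenvectors $v_1,\ldots,v_d$ of $A$, linearly independent in $\mathbb{R}^d$, with corresponding eigenvalues $\lambda_1,\ldots,\lambda_d$ (not necessarily distinct). Assemble $S := [v_1 \mid \cdots \mid v_d]$; linear independence of the columns guarantees $S$ is invertible. Setting $D := \mathrm{diag}(\lambda_1,\ldots,\lambda_d)$, the relations $Av_i = \lambda_i v_i$ combine columnwise into $AS = SD$, hence $A = SDS^{-1}$, which is precisely the diagonalizability of $A$.

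There is no genuinely hard step here: the entire content of the lemma is the translation between the matrix identity $AS = SD$ and its columnwise reading. The only subtlety worth flagging explicitly is that one must interpret ``diagonalizable'' in $\mathbb{R}^{d\times d}$ carefully when eigenvalues are complex, but since the ambient paper assumes real transition matrices with (possibly complex) Jordan structures living in $\mathbb{C}^{d\times d}$, the same column-extraction argument transfers verbatim to $\mathbb{C}^{d\times d}$ with $S \in \mathrm{GL}_d(\mathbb{C})$.
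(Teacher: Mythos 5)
Your proof is correct and is the standard textbook argument: reading $AS = SD$ column-by-column in both directions, with invertibility of $S$ equivalent to linear independence of its columns. The paper itself states this lemma as a recalled fact from linear algebra and provides no proof, so there is nothing to compare against; your argument is the canonical one and fills that gap adequately. Your closing remark about working over $\mathbb{C}$ when eigenvalues are complex is a sensible clarification, since a real matrix with non-real eigenvalues is only diagonalizable over $\mathbb{C}$, which is the setting the paper implicitly uses for its Jordan-form bounds.
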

So, in a situation where $GM(\lambda_{i}) <AM(\lambda_{i}) $, eigenvectors do not span $\mathbb{R}^d$ and one resorts with spanning the underlying state space by direct sum decomposition of $A-$ invariant subspaces (comprising of eigenvector and generalized eigenvectors). 
\begin{remark}[Theorem 4 of \cite{naeem2023spectral}]
    \label{rmk:1stquantdecay}
    For a stable matrix $A$ with $K$ distinct eigenvalues, let discrepancy related to eigenvalue $\lambda_{i}$ be $D_{\lambda_{i}}:= AM(\lambda_{i})-GM(\lambda_{i})$, then 
    \begin{equation}
        \label{eq:qnthdl} \|A^{k}\|_{\ell^{2}(\mathbb{R}^{d})} \leq \|S\|_{\ell^{2}(\mathbb{R}^{d})}\|S^{-1}\|_{\ell^{2}(\mathbb{R}^{d})} \max_{1 \leq i \leq K}  k^{D_{\lambda_{i}}} |\lambda_{i}|^{k} \bigg( \frac{1-|\lambda_{i}|}{1-|\lambda_{i}|^{D_{\lambda_{i}}+1}}\bigg).
    \end{equation}
    Notice: when $D_{\lambda_{i}}=0$ for all $i \in [K]$ then $\|A^{k}\|_{\ell^{2}(\mathbb{R}^{d})} \leq \|S\|_{\ell^{2}(\mathbb{R}^{d})}\|S^{-1}\|_{\ell^{2}(\mathbb{R}^{d})}\rho^{k}(A)$ as expected in diagonalizable case. 
\end{remark}

\section{Main Theoretical Framework}
\label{sec:main_theory_fwk}
\subsection{$\ell^2(\pi)$-Convergence and Variance Decay}

This section studies convergence in the $\ell^2(\pi)$ norm, which quantifies 
how quickly the conditional expectations $\mathbb{E}[f(X_k)|X_0]$ become 
deterministic, converging to $\pi(f)$. This convergence rate directly governs 
the statistical efficiency of MCMC estimators by controlling the decay of 
autocorrelations and the effective sample size. Conventionally, the $\ell^2(\pi)$-distance to stationarity after $k$ steps is defined as:
\begin{align}
    \|p^k - \pi\|_{\ell^{2}(\pi)} := \sup_{f \in \ell^{2}(\pi):\|f-\pi(f)\|_{\ell^{2}(\pi)}=1} \big\|P^{k}f-\pi(f)\big\|_{\ell^{2}(\pi)},
\end{align}

This operator norm controls the worst-case convergence rate across all 
square-integrable functions which are orthogonal to the space of constant functions in $\ell^{2}(\pi)$, and directly governs key practical metrics:

\begin{align}
    \forall f \in \ell^{2}(\pi): \quad & \sqrt{\mathrm{Var}_{X_{0} \sim \pi}\left(\mathbb{E}[f(X_{k})\mid X_{0}]\right)} \leq \|p^k - \pi\|_{\ell^{2}(\pi)} \sqrt{\mathrm{Var}_{\pi}[f]}, \\
    \forall f,g \in \ell^{2}(\pi), k > 0: \quad & \left|\mathrm{Corr}\left(f(X_k), g(X_0)\right)\right| \leq \|p^k - \pi\|_{\ell^{2}(\pi)}.
\end{align}

A fundamental challenge in analyzing non-reversible chains has been the lack of 
computable methods for determining $\|p^k - \pi\|_{\ell^{2}(\pi)}$. Existing 
approaches typically bound this quantity by $\sigma_2(P)^k$, which is only 
tight for reversible dynamics and often fails to detect convergence in 
non-reversible scenarios.Our formulation via $PU_{\perp 1}$ overcomes these 
limitations and provides the first explicitly computable framework for non-reversible chains. As we will show below the following identity $\|p^k - \pi\|_{\ell^{2}(\pi)} = \|(PU_{\perp 1})^k\|_{\ell^2(\pi)}$, which can be explicitly computed(show in following subsection \ref{subsec:iso_pi_n}) as matrix norm of $\|D_{\pi}^{\frac{1}{2}}(PU_{\perp 1})^{k}D_{\pi}^{-\frac{1}{2}}\|_{\ell^{2}(\mathbb{R}^{n})}$, bypassing the limitations of spectral methods that fail for non-reversible chains.
We now analyze the operator $(PU_{\perp 1})^{k}$ as a bounded linear operator 
on $\ell^{2}(\pi)$, establishing its fundamental properties and computational 
tractability.
\begin{theorem}
\label{thm:var_l2_proj}
The $\ell^{2}(\pi)$ distance to stationarity after $k$ steps equals the $\ell^{2}(\pi)$ operator norm of the $k$-th power of the projected transition operator:
\begin{align}
\label{eq:norm_proj=sig_2}
\|P^{k} U_{\perp 1}\|_{\ell^{2}(\pi)}
  = \sup_{\substack{f \in V_{\perp 1}\\ \|f\|_{\ell^{2}(\pi)}=1}}
      \|P^{k} f\|_{\ell^{2}(\pi)}
  = \|p^{k} - \pi\|_{\ell^{2}(\pi)}=\sigma_{2}(P^{k}).
\end{align}
\end{theorem}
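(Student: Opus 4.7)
The plan is to prove the chain of equalities in three short steps, each of which is essentially an algebraic identification exploiting (i) idempotence and self-adjointness of $U_{\perp 1}$, (ii) the commutation $PU_{\perp 1}=U_{\perp 1}P$ and $P\mathbf{1}=\mathbf{1}$, and (iii) the variational characterization of singular values recalled in Remark \ref{rmk: sing_and_mat_norms}.

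\textbf{Step 1: Operator norm equals the supremum over the mean-zero subspace.} First I would show
\[
\|P^{k}U_{\perp 1}\|_{\ell^{2}(\pi)}
=\sup_{g \in V_{\perp 1},\ \|g\|_{\ell^{2}(\pi)}=1}\|P^{k}g\|_{\ell^{2}(\pi)}.
\]
For the ``$\leq$'' direction, write $g:=U_{\perp 1}f$; since $U_{\perp 1}$ is an orthogonal projection, $\|g\|_{\ell^{2}(\pi)}\leq\|f\|_{\ell^{2}(\pi)}$ and $g\in V_{\perp 1}$, so $\|P^{k}U_{\perp 1}f\|_{\ell^{2}(\pi)}=\|P^{k}g\|_{\ell^{2}(\pi)}$ is dominated by the RHS. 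For ``$\geq$'', any unit $g\in V_{\perp 1}$ satisfies $U_{\perp 1}g=g$, so $\|P^{k}g\|_{\ell^{2}(\pi)}=\|P^{k}U_{\perp 1}g\|_{\ell^{2}(\pi)}\leq\|P^{k}U_{\perp 1}\|_{\ell^{2}(\pi)}$.

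\textbf{Step 2: Identification with $\|p^{k}-\pi\|_{\ell^{2}(\pi)}$.} The key algebraic observation is that for any $f\in\ell^{2}(\pi)$,
\[
U_{\perp 1}f=f-\pi(f)\mathbf{1},\qquad
P^{k}U_{\perp 1}f=P^{k}f-\pi(f)\mathbf{1},
\]
where the second equality uses $P^{k}\mathbf{1}=\mathbf{1}$. Substituting into the conventional definition,
\[
\|p^{k}-\pi\|_{\ell^{2}(\pi)}
=\sup_{\|f-\pi(f)\mathbf{1}\|_{\ell^{2}(\pi)}=1}\|P^{k}f-\pi(f)\mathbf{1}\|_{\ell^{2}(\pi)}
=\sup_{\|U_{\perp 1}f\|_{\ell^{2}(\pi)}=1}\|P^{k}U_{\perp 1}f\|_{\ell^{2}(\pi)}.
\]
Setting $g:=U_{\perp 1}f$ and noting that $g$ ranges over all of $V_{\perp 1}$ as $f$ ranges over $\ell^{2}(\pi)$, this supremum coincides with the RHS of Step 1.

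\textbf{Step 3: Identification with $\sigma_{2}(P^{k})$.} Since $P^{k}\mathbf{1}=\mathbf{1}$ and $\|\mathbf{1}\|_{\ell^{2}(\pi)}=1$, the constant vector $\mathbf{1}$ achieves the top singular value $\sigma_{1}(P^{k})=1$ (equation \ref{eq:P_op_norm_one_ell_2}). Hence $\mathbf{1}$ can be taken as the leading right singular vector, and the variational formula \ref{eq:sig_2_orth_1_sup} gives
\[
\sigma_{2}(P^{k})=\max\{\|P^{k}f\|_{\ell^{2}(\pi)}:\ \langle f,\mathbf{1}\rangle_{\pi}=0,\ \|f\|_{\ell^{2}(\pi)}=1\},
\]
and the constraint $\langle f,\mathbf{1}\rangle_{\pi}=0$ is precisely the definition of $V_{\perp 1}$, closing the loop with Steps 1--2.

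\textbf{Main obstacle.} The proof is essentially bookkeeping; the only subtle point is the change of variables $g=U_{\perp 1}f$ in Step 2, where one must verify that normalising $\|U_{\perp 1}f\|_{\ell^{2}(\pi)}$ (rather than $\|f\|_{\ell^{2}(\pi)}$) yields exactly the operator norm on the invariant subspace $V_{\perp 1}$; this depends on idempotence $U_{\perp 1}^{2}=U_{\perp 1}$ and the fact that $P^{k}$ leaves $V_{\perp 1}$ invariant, both of which were established in the preceding remarks.
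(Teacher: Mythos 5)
Your proposal is correct and follows essentially the same route as the paper's proof: rewrite the distance to stationarity as a supremum of $\|P^{k}U_{\perp 1}f\|_{\ell^{2}(\pi)}$ over $\|U_{\perp 1}f\|_{\ell^{2}(\pi)}=1$, identify this with the supremum over unit vectors in $V_{\perp 1}$ (using that $U_{\perp 1}$ annihilates constants, so restricting to $V_{\perp 1}$ loses nothing in the operator norm), and invoke the variational characterization \eqref{eq:sig_2_orth_1_sup} of $\sigma_{2}(P^{k})$ from Remark \ref{rmk: sing_and_mat_norms}. Your Step 1/Step 3 bookkeeping makes explicit a couple of points the paper leaves implicit (normalising $g=U_{\perp 1}f$, and that $\mathbf{1}$ is a valid leading singular vector), but the substance is identical.
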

\begin{proof}
    \begin{align}
        \|p^{k}-\pi\|_{\ell^{2}(\pi)} & \nonumber :=\sup_{f \in \ell^{2}(\pi):\|f-\pi(f)\|_{\ell^{2}(\pi)}=1} \big\|P^{k}f-\pi(f)\big\|_{\ell^{2}(\pi)}\\ & \nonumber= \sup_{f\in \ell^{2}(\pi):\|U_{\perp 1}f\|_{\ell^{2}(\pi)}=1}\|P^{k}U_{\perp 1}f\|_{\ell^{2}(\pi)} \\ & \nonumber = \sup_{\substack{f \in V_{\perp 1}\\ \|f\|_{\ell^{2}(\pi)}=1}}\|P^{k} f\|_{\ell^{2}(\pi)} \\ & \label{eq:eq_sig_2} = \sigma_{2}(P^{k}),
    \end{align}
where \eqref{eq:eq_sig_2} follows from the equation \eqref{eq:sig_2_orth_1_sup} in remark \ref{rmk: sing_and_mat_norms}. In order to complete the proof note that for any $g$ in space of constant functions w.r.t $\ell^{2}(\pi)$ we have that $\|P^{k}U_{\perp 1}g\|_{\ell^{2}(\pi)}=0$, so $\|P^{k} U_{\perp 1}\|_{\ell^{2}(\pi)}= \sup_{\substack{f \in V_{\perp 1}\\ \|f\|_{\ell^{2}(\pi)}=1}}\|P^{k} f\|_{\ell^{2}(\pi)}$.      
\end{proof}

\subsection{Computational framework via the isomorphism $\ell^{2}(\pi)\cong\ell^{2}(\mathbb{R}^n)$}
\label{subsec:iso_pi_n}

Let $D_\pi=\operatorname{diag}(\pi_1,\dots,\pi_n)$ and define the linear isometry
\[
T:\ell^2(\pi)\to\ell^2(\mathbb{R}^n),\qquad T f = D_\pi^{1/2} f .
\]
Since $T=D_\pi^{1/2}$ is symmetric and invertible we have $T^{-1}=D_\pi^{-1/2}$ and
\[
\langle f,g\rangle_{\ell^2(\pi)}=f^\top D_\pi g = (Tf)^\top (Tg)=\langle Tf,Tg\rangle_{\ell^2(\mathbb{R}^n)}.
\]

Put
\[
\hat P := T P T^{-1} = D_\pi^{1/2} P D_\pi^{-1/2},
\]
so $\hat P$ is the matrix of $P$ in the Euclidean representation induced by $T$.  Below $^T$ denotes Euclidean transpose and $^*$ denotes the adjoint with respect to $\langle\cdot,\cdot\rangle_{\ell^2(\pi)}$; recall the identity
\[
D_\pi P^* = P^T D_\pi.
\]

\begin{theorem}
\label{thm:similar_isomorphism}
For every integer $k\ge1$ the matrices $(\hat P^{k})^{T}\hat P^{k}$ and $(P^{k})^{*}P^{k}$ are similar. Consequently, the singular values of $P^{k}$ acting on $\ell^2(\pi)$ equal the singular values of the $n\times n$ matrix $\hat P^{k}$ acting on $\ell^2(\mathbb{R}^n)$. 
\end{theorem}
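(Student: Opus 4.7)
The plan is to reduce the claim to a clean conjugation identity between $(\hat P^{k})^{T}\hat P^{k}$ and $(P^{k})^{*}P^{k}$, with the intertwiner being the isometry $T=D_\pi^{1/2}$ itself. The key observation is that $T$ is symmetric in the Euclidean sense ($T^{T}=T$) while being an isometry from $\ell^{2}(\pi)$ to $\ell^{2}(\mathbb{R}^{n})$, which gives a clean way to convert between the $\pi$-weighted adjoint and the Euclidean transpose.

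First I would verify that $\hat P^{k}=T P^{k} T^{-1}$ by telescoping the definition $\hat P=TPT^{-1}$ and cancelling interior factors of $T^{-1}T$. Then I would compute the Euclidean transpose: since $T=T^{T}=D_\pi^{1/2}$, we get
\[
(\hat P^{k})^{T} \;=\; (T P^{k} T^{-1})^{T} \;=\; T^{-1}(P^{k})^{T}T,
\]
so that
\[
(\hat P^{k})^{T}\hat P^{k}\;=\;T^{-1}(P^{k})^{T}\,T^{2}\,P^{k} T^{-1}\;=\;D_\pi^{-1/2}\,(P^{k})^{T}D_\pi P^{k}\,D_\pi^{-1/2}.
\]

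Next I would invoke the identity $D_\pi P^{*}=P^{T}D_\pi$ from \eqref{eq:P_l2_adj} (applied to $P^{k}$, whose adjoint in $\ell^{2}(\pi)$ is $(P^{*})^{k}$) to rewrite the Gramian in $\ell^{2}(\pi)$:
\[
(P^{k})^{*}P^{k}\;=\;D_\pi^{-1}(P^{k})^{T}D_\pi P^{k}.
\]
Comparing the two expressions yields the conjugation
\[
(\hat P^{k})^{T}\hat P^{k}\;=\;D_\pi^{1/2}\,(P^{k})^{*}P^{k}\,D_\pi^{-1/2}\;=\;T\,(P^{k})^{*}P^{k}\,T^{-1},
\]
which is exactly the asserted similarity, implemented by $T$.

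Finally, similar matrices share eigenvalues, so by the definition of singular values recalled in Remark \ref{rmk: sing_and_mat_norms} (namely $\sigma_{i}(P^{k})=\sqrt{\lambda_{i}((P^{k})^{*}P^{k})}$ in $\ell^{2}(\pi)$, and likewise the Euclidean singular values of $\hat P^{k}$ are $\sqrt{\lambda_{i}((\hat P^{k})^{T}\hat P^{k})}$), the two singular-value sequences coincide. I do not foresee a genuine obstacle; the only place requiring care is keeping straight that $T$ is simultaneously (i) an isometry $\ell^{2}(\pi)\to\ell^{2}(\mathbb{R}^{n})$ and (ii) a symmetric positive definite matrix in the Euclidean picture, which is precisely what makes the transpose of $TPT^{-1}$ collapse to $T^{-1}P^{T}T$ without any sandwich of distinct intertwiners. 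One can alternatively phrase the conclusion more abstractly by noting that $T$ is a Hilbert-space isomorphism and singular values are invariant under such isomorphisms, but the explicit matrix computation above seems preferable for a computational framework section.
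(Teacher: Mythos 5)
Your proposal is correct and follows essentially the same route as the paper: conjugate $\hat P^{k}=TP^{k}T^{-1}$ with $T=D_\pi^{1/2}$ symmetric, use the identity $(P^{k})^{T}D_\pi=D_\pi(P^{k})^{*}$ to convert the Euclidean Gramian into $T\,(P^{k})^{*}P^{k}\,T^{-1}$, and conclude by invariance of eigenvalues under similarity. The only cosmetic difference is that you derive the two Gramians separately and compare, whereas the paper substitutes the adjoint identity directly into the single chain of equalities; the content is identical.
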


\begin{proof}
Using $T^T=T$ and $T^{-1}=D_\pi^{-1/2}$ we compute
\[
(\hat P^{k})^{T}\hat P^{k}
= (T P^{k} T^{-1})^{T}(T P^{k} T^{-1})
= T^{-1}(P^{k})^{T} T T P^{k} T^{-1}
= D_\pi^{-1/2} (P^{k})^{T} D_\pi\, P^{k} D_\pi^{-1/2}.
\]
By the adjoint identity $(P^{k})^{T} D_\pi = D_\pi (P^{k})^{*}$ (equivalently $(P^{k})^*=(P^*)^k$), the last display equals
\[
D_\pi^{-1/2} D_\pi (P^{k})^{*}P^{k} D_\pi^{-1/2} = D_\pi^{1/2} (P^{k})^{*}P^{k} D_\pi^{-1/2}
= T\big((P^{k})^{*}P^{k}\big)T^{-1}.
\]
Thus $(\hat P^{k})^{T}\hat P^{k}$ is similar to $(P^{k})^{*}P^{k}$, and hence they have the same eigenvalues. Since singular values are the square roots of these eigenvalues, the singular values of $\hat P^{k}$ (in the Euclidean sense) coincide with the singular values of $P^{k}$ in $\ell^2(\pi)$.
\end{proof}
\begin{remark}
In case of  $\ell^{2}(\pi)$ convergence we are primarily interested in decay of $\|P^{k}U_{\perp 1}\|_{\ell^{2}(\pi)}$ which can now be computed scientifically as $\big\|D_{\pi}^{1/2} P^{k} U_{\perp 1} D_{\pi}^{-1/2}\big\|_{\ell^{2}(\mathbb{R}^n)}$, based on preceding Theorem \ref{thm:similar_isomorphism}   
\begin{align}
\label{eq:sigma2-compute}
\sigma_2(P^{k}) \;=\; \|P^{k} U_{\perp 1}\|_{\ell^{2}(\pi)}
\;=\; \big\|D_{\pi}^{1/2} P^{k} U_{\perp 1} D_{\pi}^{-1/2}\big\|_{\ell^{2}(\mathbb{R}^n)}
\end{align}    
\end{remark}

\subsection{$\chi^{2}$ Convergence, Pointwise Submultiplicativity and Hypocoercivity}

This section establishes powerful results for $\chi^2$-convergence, including the first general proof of pointwise submultiplicativity for non-reversible chains and explicit constants for hypocoercivity. The $\chi^2$-divergence provides a stronger notion of convergence than total variation, particularly useful for quantifying relative error in MCMC estimation.

\begin{definition}[$\chi^2$-mixing time]
    For $\epsilon > 0$, the $\chi^2$-mixing time is defined as:
    \begin{align}
        \tau_{\chi^{2}}(\epsilon):=\inf\left\{t>0: \max_{i \in [n]} \chi_{i}^{2}(t) \leq \epsilon \right\},
    \end{align}
    where $\chi_{i}^{2}(t)=\sum_{j=1}^{n}\frac{(p^{t}(i,j)-\pi_{j})^{2}}{\pi_{j}}$, where $p^{t}(i,j)$ is the probability of being in state $j$ after $t$ time steps starting from i. Now notice that $\chi_{i}^{2}(0)=\sum_{j=1}^{n}\frac{(p^{0}(i,j)-\pi_{j})^{2}}{\pi_{j}}=\frac{(1-\pi_{i})^{2}}{\pi_{i}}+\sum_{j \neq i}\pi_{j}=\frac{(1-\pi_{i})^{2}}{\pi_{i}}+(1-\pi_{i})=\frac{1-\pi_{i}}{\pi_{i}}.$ 
\end{definition}

\begin{definition}[Global $\chi^2$-divergence and singular values]
    The averaged $\chi^2$-divergence $\vec{\chi^{2}}(k):=[\chi_{1}^{2}(k),\ldots,\chi_{n}^{2}(k)]$ satisfies:
    \begin{align}
        \langle \vec{\chi^{2}}(k),\mathbf{1}\rangle_{\pi} = \sum_{i}\pi_{i}\chi_{i}^{2}(k) = \mathrm{Tr}(P^{k}(P^{k})^{*})-1 = \sum_{j=2}^{n}\sigma_{j}^{2}(P^{k}).
    \end{align}
    This reveals that the \emph{average} $\chi^2$-divergence equals the sum of squared non-trivial singular values, providing a spectral characterization of global convergence.
\end{definition}

\subsubsection*{Key Theoretical Advance: Pointwise Submultiplicativity}

\begin{theorem}[Pointwise $\chi^2$ submultiplicativity]
    For any initial state $i \in [n]$ and times $k,t \in \mathbb{N}$:
    \begin{align}
        \chi_{i}^{2}(k+t) \leq \sigma_{2}^{2}(P^{t}) \chi_{i}^{2}(k).
    \end{align}
\end{theorem}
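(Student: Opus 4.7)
The plan is to represent $\chi_i^2(k)$ as a squared $\ell^2(\pi)$-norm of a mean-one density, verify that the centered density lies in $V_{\perp 1}$, show that evolving by $t$ additional steps corresponds to applying the adjoint $(P^{*})^{t}$ to that centered density, and finally invoke Theorem \ref{thm:var_l2_proj} which identifies the operator norm of $(P^{t})U_{\perp 1}$ with $\sigma_2(P^{t})$.

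First, I introduce the density $g_{i,k}(j) := p^{k}(i,j)/\pi_j$ and the centered density $f_{i,k} := g_{i,k} - \mathbf{1}$. A direct computation gives
\[
\chi_i^2(k) \;=\; \sum_{j} \frac{(p^{k}(i,j)-\pi_j)^2}{\pi_j} \;=\; \sum_j f_{i,k}(j)^2 \pi_j \;=\; \|f_{i,k}\|_{\ell^2(\pi)}^2,
\]
and since $\pi(g_{i,k}) = \sum_j \pi_j \cdot p^k(i,j)/\pi_j = 1$, we have $\langle f_{i,k}, \mathbf{1}\rangle_\pi = 0$, so $f_{i,k} \in V_{\perp 1}$.

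Next, I use the time-reversal identity $p^{t}(\ell,j)/\pi_j = p^{*t}(j,\ell)/\pi_\ell$, which follows by iterating $p^{*}(j,\ell) = p(\ell,j)\pi_\ell / \pi_j$. Using it in the Chapman--Kolmogorov expansion,
\[
g_{i,k+t}(j) \;=\; \frac{1}{\pi_j}\sum_{\ell} p^{k}(i,\ell)\, p^{t}(\ell,j) \;=\; \sum_{\ell} g_{i,k}(\ell)\, p^{*t}(j,\ell) \;=\; \bigl((P^{*})^{t} g_{i,k}\bigr)(j).
\]
Subtracting $\mathbf{1} = (P^{*})^{t}\mathbf{1}$ from both sides gives $f_{i,k+t} = (P^{*})^{t} f_{i,k}$.

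Finally, because $f_{i,k} \in V_{\perp 1}$,
\[
\chi_i^2(k+t) \;=\; \|(P^{*})^{t} f_{i,k}\|_{\ell^2(\pi)}^2 \;\leq\; \bigl\|(P^{*})^{t} U_{\perp 1}\bigr\|_{\ell^2(\pi)}^2\,\|f_{i,k}\|_{\ell^2(\pi)}^2.
\]
The operator norm factor equals $\|P^{t} U_{\perp 1}\|_{\ell^2(\pi)}^2$ since $U_{\perp 1}$ is self-adjoint in $\ell^2(\pi)$ and taking adjoints preserves operator norm, and by Theorem \ref{thm:var_l2_proj} this equals $\sigma_2(P^{t})^2$. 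Combining with $\|f_{i,k}\|_{\ell^2(\pi)}^2 = \chi_i^2(k)$ yields the claim.

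The only step that requires care is the passage $g_{i,k+t} = (P^{*})^{t} g_{i,k}$: it is tempting to write a forward identity $g_{i,k+t} = P^{t} g_{i,k}$ in analogy with evolving observables, but the correct statement for densities is evolution under the adjoint. Once the adjoint identity is recognized, the rest is a Cauchy--Schwarz-type bound composed with the norm computation from Theorem \ref{thm:var_l2_proj}; no reversibility assumption on $P$ enters, which is exactly why this gives the first general pointwise submultiplicativity for non-reversible chains.
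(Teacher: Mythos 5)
Your proof is correct and follows essentially the same route as the paper's: your $f_{i,k}$ is exactly the paper's $\vec{\chi_{i}}(k)=(P^{k})^{*}D_{\pi}^{-1}e_{i}-\mathbf{1}$, and both arguments rest on the adjoint evolution $f_{i,k+t}=(P^{*})^{t}f_{i,k}$ inside $V_{\perp 1}$ combined with the identification $\|P^{t}U_{\perp 1}\|_{\ell^{2}(\pi)}=\sigma_{2}(P^{t})$ from Theorem \ref{thm:var_l2_proj}. The only cosmetic difference is the final inequality: you bound $\|(P^{*})^{t}U_{\perp 1}\|_{\ell^{2}(\pi)}$ directly via norm-preservation under adjoints (using that $U_{\perp 1}$ is self-adjoint and commutes with $P^{*}$), whereas the paper applies Cauchy--Schwarz to $\langle P^{t}(P^{t})^{*}\vec{\chi_{i}}(k),\vec{\chi_{i}}(k)\rangle_{\pi}$ and cancels a factor of $\|\vec{\chi_{i}}(t+k)\|_{\ell^{2}(\pi)}$; your version is if anything slightly cleaner since it avoids dividing by a quantity that could vanish.
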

\begin{proof}
    Let $\vec{\chi_{i}}(k):=(P^{k})^{*}D_{\pi}^{-1}e_{i}-1$ then notice that for each $i \in [n], \langle \vec{\chi_{i}}(k),\mathbf{1}\rangle_{\pi}=0, \chi_{i}^{2}(k)=\|\vec{\chi_{i}}(k)\|_{\ell^{2}(\pi)}^{2}$. Furthermore, notice that for any $t,k \in \mathbb{N}$, we have that $\vec{\chi_{i}}(t+k)=(P^{t})^{*}\vec{\chi_{i}}(k) \in V_{\perp 1}$ orthogonal to the space of constant functions in $\ell^{2}(\pi)$ i.e. $\langle (P^{t})^{*}\vec{\chi_{i}}(k),\mathbf{1}\rangle_{\pi}=0$ and

    \begin{align}
        \|\vec{\chi_{i}}(t+k)\|_{\ell^{2}(\pi)}^{2}& \nonumber =\langle (P^{t})^{*} \vec{\chi_{i}}(k) ,(P^{t})^{*} \vec{\chi_{i}}(k)\rangle_{\pi}\\ &=\langle P^{t}(P^{t})^{*} \vec{\chi_{i}}(k) , \vec{\chi_{i}}(k)\rangle_{\pi} \\ & \label{eq:cauchy_schwarz} \leq \sqrt{\|P^{t}(P^{t})^{*} \vec{\chi_{i}}(k)\|_{\ell^{2}(\pi)}^{2}\|\vec{\chi_{i}}(k)\|_{\ell^{2}(\pi)}^{2}} \\ & \label{eq: orth_constant_sub} \leq \sqrt{\sigma_{2}^{2}(P^{t})\|(P^{t})^{*} \vec{\chi_{i}}(k)\|_{\ell^{2}(\pi)}^{2}\|\vec{\chi_{i}}(k)\|_{\ell^{2}(\pi)}^{2}}   \\ & \nonumber = \sqrt{\sigma_{2}^{2}(P^{t})\|\vec{\chi_{i}}(t+k)\|_{\ell^{2}(\pi)}^{2}\|\vec{\chi_{i}}(k)\|_{\ell^{2}(\pi)}^{2}} \\ & \nonumber =\sigma_{2}(P^{t})\|\vec{\chi_{i}}(t+k)\|_{\ell^{2}(\pi)}\|\vec{\chi_{i}}(k)\|_{\ell^{2}(\pi)},
    \end{align}
    where inequality \ref{eq:cauchy_schwarz} follows from Cauchy-Schwarz, inequality \ref{eq: orth_constant_sub} follows from the fact that $(P^{t})^{*}\vec{\chi_{i}}(k) \in V_{\perp 1}$ and second last equality follows from $\vec{\chi_{i}}(t+k)=(P^{t})^{*}\vec{\chi_{i}}(k)$ and the result follows.  
\end{proof}

\begin{corollary}[Worst-case non-asymptotic $\chi^2$ bounds]
    \label{corr:submcorr}
    For any initial state $i$ and time $t$: notice that $\chi_{i}^{2}(t) \leq \sigma_{2}^{2}(P^{t})\left(\frac{1-\pi_{i}}{\pi_{i}}\right)$ and an upper bound on chi-squared distance to stationarity follows:
    \begin{align}
        \label{eq:chi_square_ubd_non_asymp}
      \chi^{2}(t) \leq \sigma_{2}^{2}(P^{t})\bigg(\frac{1-\pi_{\min}}{\pi_{min}}\bigg).        
    \end{align}
\end{corollary}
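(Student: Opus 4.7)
The plan is to derive this corollary as an immediate consequence of the pointwise submultiplicativity theorem that was just established, combined with the explicit formula for $\chi^2_i(0)$ noted in the definition. The whole argument is essentially a specialization followed by a monotonicity observation, so I do not expect any genuine obstacle.

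First I would apply the pointwise submultiplicativity inequality $\chi_{i}^{2}(k+t) \leq \sigma_{2}^{2}(P^{t})\,\chi_{i}^{2}(k)$ at the boundary value $k=0$, yielding $\chi_{i}^{2}(t) \leq \sigma_{2}^{2}(P^{t})\,\chi_{i}^{2}(0)$ for every state $i\in[n]$. Next I would substitute the explicit value $\chi_{i}^{2}(0) = \frac{1-\pi_{i}}{\pi_{i}}$ computed directly from the definition of $\chi_i^2$ (using $p^{0}(i,j)=\delta_{ij}$), giving the first displayed inequality $\chi_{i}^{2}(t) \leq \sigma_{2}^{2}(P^{t})\,\frac{1-\pi_{i}}{\pi_{i}}$.

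For the worst-case bound, I would observe that the map $x \mapsto \frac{1-x}{x}$ is strictly decreasing on $(0,1]$, so the maximum of $\frac{1-\pi_{i}}{\pi_{i}}$ over $i\in[n]$ is attained at $\pi_{\min} := \min_{i}\pi_i$. Taking the maximum over $i$ on both sides then produces
\[
\chi^{2}(t) := \max_{i\in[n]} \chi_{i}^{2}(t) \leq \sigma_{2}^{2}(P^{t})\,\frac{1-\pi_{\min}}{\pi_{\min}},
\]
which is exactly \eqref{eq:chi_square_ubd_non_asymp}. The hardest conceptual step is simply recognizing that $k=0$ is an admissible use of the submultiplicativity theorem (the proof of that theorem does not restrict $k\ge 1$, since $\vec{\chi_i}(0) = D_\pi^{-1}e_i - \mathbf{1}$ lies in $V_{\perp 1}$ with squared $\ell^2(\pi)$-norm $\frac{1-\pi_i}{\pi_i}$); once this is acknowledged, the remainder is bookkeeping.
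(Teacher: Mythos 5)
Your proposal is correct and follows the same route the paper intends: specialize the pointwise submultiplicativity inequality to $k=0$, substitute $\chi_i^2(0)=\tfrac{1-\pi_i}{\pi_i}$ from the definition, and maximize over $i$ using the monotonicity of $x\mapsto\tfrac{1-x}{x}$. Your explicit check that $\vec{\chi_i}(0)=D_\pi^{-1}e_i-\mathbf{1}$ lies in $V_{\perp 1}$ so that the $k=0$ case is legitimate is a detail the paper leaves implicit, but it is not a different argument.
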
    
\begin{theorem}
    \label{thm:asymp_conv_chi}
    [Worst-case non-asymptotic $\chi^2$ bounds for an $n$-dimensional ergodic Markov Chain]: Let $\{\lambda_{i}\}_{i \in [K]}$ be the distinct non-zero eigenvalues of $PU_{\perp1}$ with discrepancies $D_{\lambda_{i}}$ between their algebraic and geometric multiplicities. Let $D_{\pi}^{\frac{1}{2}}PU_{\perp1}D_{\pi}^{-\frac{1}{2}}=SJ_{\perp1}S^{-1}$ be its Jordan canonical form decomposition. Then for all $k \in \mathbb{N}$:
    \begin{align}
        \chi^{2}(k) \leq \left(\frac{1-\pi_{\min}}{\pi_{\min}}\right) \kappa(S)^2 \max_{1 \leq i \leq K} \left[ k^{2D_{\lambda_{i}}} |\lambda_{i}|^{2k} \left( \frac{1-|\lambda_{i}|}{1-|\lambda_{i}|^{D_{\lambda_{i}}+1}}\right)^{2} \right],
    \end{align}
    where $\kappa(S) = \|S\|_{\ell^{2}(\mathbb{R}^{n})}\|S^{-1}\|_{\ell^{2}(\mathbb{R}^{n})}$ is the condition number of the similarity transformation.
\end{theorem}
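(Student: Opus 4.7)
The plan is to chain together three results already established in the paper: the worst-case submultiplicative $\chi^{2}$ bound in Corollary \ref{corr:submcorr}, the identification of $\sigma_{2}(P^{k})$ with an explicitly computable Euclidean matrix norm via Theorems \ref{thm:var_l2_proj}--\ref{thm:similar_isomorphism}, and the non-asymptotic decay rate for powers of stable matrices from Remark \ref{rmk:1stquantdecay}. Concretely, Corollary \ref{corr:submcorr} already reduces the claim to proving
\[
\sigma_{2}^{2}(P^{k}) \;\leq\; \kappa(S)^{2}\max_{1\leq i\leq K}\Bigl[k^{2D_{\lambda_{i}}}|\lambda_{i}|^{2k}\Bigl(\tfrac{1-|\lambda_{i}|}{1-|\lambda_{i}|^{D_{\lambda_{i}}+1}}\Bigr)^{2}\Bigr],
\]
so the remaining work is an upper bound on $\sigma_{2}(P^{k})$.

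Next, I would rewrite $\sigma_{2}(P^{k})$ in a form to which the stability decay estimate applies. Using the identity $(PU_{\perp 1})^{k}=P^{k}U_{\perp 1}$ and the computational identity \eqref{eq:sigma2-compute}, together with the algebraic fact that conjugation by $D_{\pi}^{1/2}$ commutes with taking powers, I obtain
\[
\sigma_{2}(P^{k}) \;=\; \bigl\|D_{\pi}^{1/2}(PU_{\perp 1})^{k} D_{\pi}^{-1/2}\bigr\|_{\ell^{2}(\mathbb{R}^{n})} \;=\; \bigl\|\hat{A}^{k}\bigr\|_{\ell^{2}(\mathbb{R}^{n})},
\]
where $\hat{A}:=D_{\pi}^{1/2}PU_{\perp 1}D_{\pi}^{-1/2}$ is similar to $PU_{\perp 1}$ and hence has the same spectrum and Jordan structure (in particular the same discrepancies $D_{\lambda_{i}}$). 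Ergodicity gives $\rho(PU_{\perp 1})<1$, so $\hat{A}$ is stable, and Remark \ref{rmk:1stquantdecay} applied to $\hat{A}=SJ_{\perp 1}S^{-1}$ produces the bound
\[
\|\hat{A}^{k}\|_{\ell^{2}(\mathbb{R}^{n})} \;\leq\; \kappa(S)\max_{1\leq i\leq K'} k^{D_{\lambda_{i}}}|\lambda_{i}|^{k}\Bigl(\tfrac{1-|\lambda_{i}|}{1-|\lambda_{i}|^{D_{\lambda_{i}}+1}}\Bigr),
\]
where the max runs over distinct eigenvalues of $\hat{A}$. Squaring and combining with Corollary \ref{corr:submcorr} yields the theorem.

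A small but genuine subtlety is that $0$ is generically an eigenvalue of $PU_{\perp 1}$ (since $U_{\perp 1}$ has a one-dimensional kernel spanned by $\mathbf{1}$), so $\hat{A}$ has at most $K+1$ distinct eigenvalues rather than $K$. I would handle this by observing that the summand associated to $\lambda=0$ in the maximum contributes $k^{D_{0}}\cdot 0^{k}=0$ for every $k\geq 1$, so that the maximum may be restricted to the $K$ \emph{non-zero} distinct eigenvalues exactly as stated in the theorem. After this, the bound is immediate.

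The argument is essentially an assembly of previously proved pieces, so there is no single hard step; the main thing to verify carefully is the algebraic identity $D_{\pi}^{1/2}(PU_{\perp 1})^{k}D_{\pi}^{-1/2}=\hat{A}^{k}$ and the preservation of Jordan data under similarity, both of which are routine. The one place to be a bit careful is the treatment of the zero eigenvalue of $\hat{A}$, ensuring that the restriction of the maximum to non-zero eigenvalues does not lose the bound for any finite $k\geq 1$.
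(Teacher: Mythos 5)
Your proposal is correct and follows essentially the same route as the paper: Corollary \ref{corr:submcorr} to reduce to $\sigma_{2}^{2}(P^{k})$, the similarity identity \eqref{eq:sigma2-compute} to pass to the Euclidean norm of $\hat{A}^{k}$ with $\hat{A}=D_{\pi}^{1/2}PU_{\perp 1}D_{\pi}^{-1/2}$, and Remark \ref{rmk:1stquantdecay} applied to $\hat{A}$. Your explicit treatment of the zero eigenvalue of $PU_{\perp 1}$ (showing its term vanishes for $k\geq 1$ so the maximum can be restricted to the $K$ non-zero eigenvalues) is a detail the paper's one-line proof leaves implicit, but it does not change the argument.
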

\begin{proof}
The result follows by combining the pointwise $\chi^2$ bound: $\chi^2(k) \leq \left(\frac{1-\pi_{\min}}{\pi_{\min}}\right) \|P^k U_{\perp 1}\|_{\ell^2(\pi)}^2$, The similarity transformation: $\|P^k U_{\perp 1}\|_{\ell^2(\pi)} = \|D_{\pi}^{1/2} P^k U_{\perp 1} D_{\pi}^{-1/2}\|_{\ell^2(\mathbb{R}^n)}$ and setting $A=D_{\pi}^{1/2} P^k U_{\perp 1} D_{\pi}^{-1/2}$ in Remark \ref{rmk:1stquantdecay}.
\end{proof}

\begin{corollary}
\label{cor:diag_chi_sq}
For ergodic chains that are diagonalizable (i.e.  $D_{\lambda_i} = 0$ for all $i\in [K]$), we have that:
\begin{align}
    \label{eq:chi_diag_case}
    \chi^{2}(k) \leq \left(\frac{1-\pi_{\min}}{\pi_{\min}}\right) \kappa(S)^2 \rho^{2k}\big(D_{\pi}^{1/2} P^k U_{\perp 1} D_{\pi}^{-1/2}\big)
\end{align}
\end{corollary}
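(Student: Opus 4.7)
The plan is to invoke Theorem \ref{thm:asymp_conv_chi} and specialize to the diagonalizable case by setting $D_{\lambda_i}=0$ for every distinct nonzero eigenvalue $\lambda_i$ of $PU_{\perp 1}$. Diagonalizability of $PU_{\perp 1}$ is preserved under the similarity transform $A\mapsto D_{\pi}^{1/2} A D_{\pi}^{-1/2}$, so its Jordan form collapses to a diagonal matrix $J_{\perp 1}$, and the decomposition $D_{\pi}^{1/2} PU_{\perp 1} D_{\pi}^{-1/2}=SJ_{\perp 1}S^{-1}$ in the hypothesis of Theorem \ref{thm:asymp_conv_chi} becomes a genuine eigendecomposition.

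With all discrepancies vanishing, two factors in the bound of Theorem \ref{thm:asymp_conv_chi} simplify immediately: the polynomial prefactor $k^{2D_{\lambda_i}}=k^{0}=1$, and the ratio $\left(\frac{1-|\lambda_i|}{1-|\lambda_i|^{D_{\lambda_i}+1}}\right)^{2}=\left(\frac{1-|\lambda_i|}{1-|\lambda_i|}\right)^{2}=1$. The latter is well-defined because ergodicity guarantees $|\lambda_i|<1$ for each nonzero eigenvalue of $PU_{\perp 1}$, so the denominator does not vanish. What remains of the bound is
\begin{align*}
\chi^{2}(k) \;\leq\; \left(\frac{1-\pi_{\min}}{\pi_{\min}}\right)\kappa(S)^{2}\,\max_{1\leq i\leq K}|\lambda_i|^{2k}.
\end{align*}

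To finish, I would identify $\max_{i}|\lambda_i|$ with the spectral radius. Since $D_{\pi}^{1/2}PU_{\perp 1}D_{\pi}^{-1/2}$ and $PU_{\perp 1}$ are similar, they share the same spectrum, so $\max_i|\lambda_i|=\rho(PU_{\perp 1})=\rho\!\left(D_{\pi}^{1/2}PU_{\perp 1}D_{\pi}^{-1/2}\right)$. Raising this to the $2k$-th power (equivalently, using $\rho(A^{k})=\rho(A)^{k}$ to rewrite the argument inside $\rho(\cdot)$) yields the claimed inequality \eqref{eq:chi_diag_case}. There is no real obstacle here: the corollary is a direct specialization of Theorem \ref{thm:asymp_conv_chi}, and the only subtlety is verifying that the ratio factor becomes exactly $1$ when $D_{\lambda_i}=0$ rather than requiring a limiting argument, which is immediate once the expression is read algebraically under the ergodicity hypothesis $|\lambda_i|<1$.
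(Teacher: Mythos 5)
Your proof is correct and matches the paper's (implicit) argument exactly: the corollary is obtained by specializing Theorem \ref{thm:asymp_conv_chi} to $D_{\lambda_i}=0$, whereupon $k^{2D_{\lambda_i}}=1$, the ratio factor equals $1$ since $|\lambda_i|<1$ under ergodicity, and $\max_i|\lambda_i|=\rho(PU_{\perp 1})=\rho(D_{\pi}^{1/2}PU_{\perp 1}D_{\pi}^{-1/2})$. Your remark about rewriting via $\rho(A^k)=\rho(A)^k$ also correctly flags that the exponent in \eqref{eq:chi_diag_case} should be read as $\rho(D_{\pi}^{1/2}PU_{\perp 1}D_{\pi}^{-1/2})^{2k}$ rather than the literal $2k$-th power of the spectral radius of the $k$-th power matrix.
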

\begin{definition}[Discrete-time hypocoercivity]
    A Markov chain exhibits hypocoercivity if there exist constants $C<\infty$ and $\lambda \in (0,1)$ such that:
    \begin{align}
        \label{eq:hyp_constants}
        \sup_{f \in \ell^{2}(\pi):\|f-\pi(f)\|_{\ell^{2}(\pi)}=1} \|P^{k}f-\pi(f)\|_{\ell^{2}(\pi)} \leq C \lambda^{k}.
    \end{align}
\end{definition}
Previous works \cite{baxendale2005renewal,villani2006hypocoercivity} provided existential results under ergodicity assumption

\begin{remark}[Hypocoercivity constants under diagonalizability assumption]
\label{rmk:hypocoercivity-constants} 
For diagonalizable ergodic Markov chain with transition matrix $P$ and stationary distribution $\pi$, let $\rho = \rho(D_{\pi}^{1/2} P U_{\perp 1} D_{\pi}^{-1/2})$ and $\kappa(S)$ be the condition number of the similarity transformation to Jordan form. Then $\lambda =\rho$ and $C=\kappa(S)$.
\end{remark}

\subsection{Relaxation Times and Neumann Series of $PU_{\perp 1}$}
\label{sec:dev_avg_relaxation_time}
Recall that generator of an $n$- dimensional Markov Chain $L:=I_{n}-P$. Since $L\mathbf{1}=0$, we see that smallest singular value of $L$: $\sigma_{n}(L)=\|L1\|_{\ell^{2}(\pi)}=0$ and by variational formulation of singular values as in remark \ref{rmk: sing_and_mat_norms}: $\sigma_{n-1}(L)=\inf_{f \in V_{\perp 1}\cap\|f\|_{\ell^{2}(\pi)}=1}\|Lf\|_{\ell^{2}(\pi)}$. It's often infeasible to directly compute $\pi(f)$, either due to complexity of $f$ or $\pi$ only being known up to a proportionality constant. So practitioners run a Markov chain with $\pi$ as stationary measure and after it mixes to $\pi$: we ask how good of an estimator are the ergodic averages from Markov Chain $\frac{1}{N}\sum_{i=0}^{N-1}f(x_{i})$, for $\pi(f)$? A baseline for reference is independent and identically distributed $y_{i} \thicksim \pi$ with variance:
$\mathrm{Var}(\tfrac{1}{N}\sum_{i=0}^{N-1}f(y_{i}))=\tfrac{N \mathrm{Var}_{\pi}(f)}{N^{2}}=\tfrac{1}{N}$ but when you have samples from stationary chain:
\begin{remark}[Chatterjee's Relaxation Time]
\label{thm: var_erg_avgs}
\cite[Theorem 1.2]{chatterjee2023spectral} establishes that for any $N > 0$ and $f:\mathrm{Var}_{\pi}(f)=1$
\begin{align}
    \label{eq:relax_chatterjee}
    \mathrm{Var}_{x_{0}\thicksim \pi}\bigg(\frac{1}{N}\sum_{i=0}^{N-1}f(x_{i})\bigg) \leq  \frac{4\tau_{rel}}{N},
\end{align}
where $\tau_{rel}:=\frac{1}{\sigma_{n-1}(L)}$ is defined as the \emph{relaxation time}. 
\end{remark}
Factor of $4 \tau_{rel}$ compared to i.i.d sampling must be originating from autocorrelations at various time lags as:
\begin{align}
    \mathrm{Var}_{\pi}\Big(\tfrac{1}{N}\sum_{i=0}^{N-1}f(x_{i})\Big)& \nonumber =\tfrac{1}{N^{2}}\sum_{i=0}^{N-1}\Big[\mathrm{Var}_{\pi}(f(x_{i}))+2\sum_{j>i}\mathrm{Cov}_{\pi}(f(x_{i}),f(x_{j}))\Big] \\ & \label{eq:stationarity}= \tfrac{1}{N^{2}}\Big[N+2\sum_{i=1}^{N-1}(N-i)\mathrm{Cov}_{\pi}(f(x_{0}),f(x_{i}))\Big] \\ & \label{eq:corr_decay} \leq \tfrac{1}{N}+\Big(\tfrac{2}{N^{2}}\Big)\sum_{i=1}^{N-1}(N-i)\|P^{i}U_{\perp 1}\|_{\ell^{2}(\pi)} 
\end{align}
where the equation \eqref{eq:stationarity} follows from stationarity and equation \eqref{eq:corr_decay} follows form Cauchy-Schwarz. To formalize the relation between the relaxation time \(t_{\mathrm{rel}}=1/\sigma_{n-1}(L)\) and autocorrelations at all lags, we have to look at the inverse of generator.
\begin{theorem}
    $L$ is a bijection on $V_{\perp 1}$, as $V_{\perp 1}$ is $L$-invariant and kernel of $L$ lies in the space of constant functions in $\ell^{2}(\pi)$, by inverse mapping theorem $L^{-1}:V_{\perp 1} \rightarrow V_{\perp 1}$ exists, with $\ell^{2}(\pi)$ norm equal to relaxation time.  
    \begin{align}
        \|L^{-1}\|_{V_{\perp 1}}=\frac{1}{\sigma_{n-1}(L)}
    \end{align}
\end{theorem}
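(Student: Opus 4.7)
The plan is to verify the three claims in sequence: (i) $V_{\perp 1}$ is $L$-invariant, (ii) $L$ restricted to $V_{\perp 1}$ is a bijection, and (iii) the resulting operator norm of $L^{-1}$ equals $1/\sigma_{n-1}(L)$. The finite-dimensional setting makes the first two steps essentially bookkeeping, and the norm computation reduces to the variational characterization of $\sigma_{n-1}(L)$ already recorded in the excerpt.

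First I would show $V_{\perp 1}$ is $L$-invariant. For any $f \in V_{\perp 1}$,
\[
\langle Lf, \mathbf{1}\rangle_{\pi} = \langle f, \mathbf{1}\rangle_{\pi} - \langle Pf, \mathbf{1}\rangle_{\pi} = \langle f, \mathbf{1}\rangle_{\pi} - \langle f, P^{*}\mathbf{1}\rangle_{\pi} = 0,
\]
since $P^{*}\mathbf{1}=\mathbf{1}$ (noted just after equation \eqref{eq:P_l2_adj}). Hence $Lf \in V_{\perp 1}$. Next, I would identify the kernel: under the standing ergodicity assumption, $Pf=f$ forces $f$ to be constant (irreducibility/aperiodicity of $P$), so $\ker L$ is exactly the one-dimensional span of $\mathbf{1}$. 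In particular $\ker L \cap V_{\perp 1} = \{0\}$, since a constant with zero $\pi$-mean vanishes. Because $V_{\perp 1}$ is finite-dimensional and $L|_{V_{\perp 1}}$ is an injective endomorphism, the rank-nullity theorem gives that it is a bijection, and by the inverse mapping theorem its inverse $L^{-1}:V_{\perp 1}\to V_{\perp 1}$ exists and is bounded.

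For the norm identity, I would change variables in the definition of the operator norm. Writing every $g\in V_{\perp 1}$ as $g=Lf$ with $f=L^{-1}g\in V_{\perp 1}$,
\begin{align*}
\|L^{-1}\|_{V_{\perp 1}}
&= \sup_{\substack{g \in V_{\perp 1}\\ \|g\|_{\ell^{2}(\pi)}=1}} \|L^{-1}g\|_{\ell^{2}(\pi)}
= \sup_{\substack{f \in V_{\perp 1}\\ f\neq 0}} \frac{\|f\|_{\ell^{2}(\pi)}}{\|Lf\|_{\ell^{2}(\pi)}} \\
&= \Bigl(\inf_{\substack{f \in V_{\perp 1}\\ \|f\|_{\ell^{2}(\pi)}=1}} \|Lf\|_{\ell^{2}(\pi)}\Bigr)^{-1}
= \frac{1}{\sigma_{n-1}(L)},
\end{align*}
where the last equality invokes the variational formula $\sigma_{n-1}(L)=\inf_{f\in V_{\perp 1},\ \|f\|_{\ell^{2}(\pi)}=1}\|Lf\|_{\ell^{2}(\pi)}$ (from the singular-value variational characterization in Remark \ref{rmk: sing_and_mat_norms} applied to $L$, using that the smallest singular value $\sigma_n(L)=0$ is attained on $\mathrm{span}(\mathbf{1})$, so the next singular value is realized on the orthogonal complement $V_{\perp 1}$).

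The only delicate point — and the step I would treat carefully — is the last identification. One must justify that the infimum of $\|Lf\|_{\ell^{2}(\pi)}$ over unit-norm $f\in V_{\perp 1}$ really coincides with $\sigma_{n-1}(L)$, rather than with some singular value of the restricted operator $L|_{V_{\perp 1}}$. This follows because the singular values of $L$ are the critical values of $f\mapsto \|Lf\|_{\ell^{2}(\pi)}$ on the unit sphere, and restricting to the $P$-invariant subspace $V_{\perp 1}$ (complementary to the null direction $\mathbf{1}$) precisely removes the trivial singular value $\sigma_n(L)=0$ while preserving the remaining ones; this is exactly the content of equation \eqref{eq:sig_2_orth_1_sup} adapted to $L$. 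With this identification in hand, the theorem is established.
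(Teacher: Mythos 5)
Your proof is correct and follows essentially the same route as the paper: both arguments reduce the claim to the identity $\|L^{-1}\|_{V_{\perp 1}}=\bigl(\inf_{f\in V_{\perp 1},\,\|f\|_{\ell^{2}(\pi)}=1}\|Lf\|_{\ell^{2}(\pi)}\bigr)^{-1}$ and then invoke the variational identification of that infimum with $\sigma_{n-1}(L)$; the paper establishes the two inequalities $\geq$ and $\leq$ separately via $1=\|L^{-1}Lf\|$ and $1=\|LL^{-1}f\|$, whereas you obtain equality in one step by the substitution $g=Lf$. You are in fact slightly more complete than the paper, which dismisses the invariance and bijectivity of $L$ on $V_{\perp 1}$ as "easy to check" while you verify them explicitly.
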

\begin{proof}
Bijection of $L$ on $V_{\perp 1}$ is easy to check and we will focus on proving norm equality. $\|\cdot\|_{V_{\perp 1}}$ is essentially $\ell^{2}(\pi)$ norm on the elements of $V_{\perp 1}$, so for any $f \in V_{\perp 1} \cap \|f\|_{\ell^{2}(\pi)}=1$   
\begin{align}
    & \nonumber 1=\|L^{-1}Lf\|_{V_{\perp 1}}\leq \|L^{-1}\|_{V_{\perp 1}}\|Lf\|_{V_{\perp 1}} \\ & \frac{1}{\inf_{f \in V_{\perp 1}\cap \|f\|_{\ell^{2}(\pi)}=1}\|Lf\|_{V_{\perp 1}}} \leq \|L^{-1}\|_{V_{\perp 1}} \\ & \nonumber
    \frac{1}{\sigma_{n-1}(L)} \leq \|L^{-1}\|_{V_{\perp 1}},
\end{align}
and similarly for any $f \in V_{\perp 1}\cap \|f\|_{\ell^{2}(\pi)}=1$
\begin{align}
    & \nonumber 1=\|LL^{-1}f\|_{V_{\perp 1}}\geq \sigma_{n-1}(L) \|L^{-1}f\|_{V_{\perp 1}} \\ & \nonumber \|L^{-1}\|_{V_{\perp 1}}:=\sup_{f \in V_{\perp 1}\cap \|f\|_{\ell^{2}(\pi)}=1}\|L^{-1}f\|_{V_{\perp 1}} \leq \frac{1}{\sigma_{n-1}(L)},
\end{align}
and the result follows.
\end{proof}
\begin{theorem}[Relaxation Time and Autocorrelation Structure]
The relaxation time $t_{\mathrm{rel}} = 1/\sigma_{n-1}(L)$ of an ergodic Markov Chain $\rho(PU_{\perp 1})<1$; fundamentally captures the cumulative effect of autocorrelations across all time lags $k$ and can be explicitly computed as:
   \begin{align}
       \Bigg\|D_{\pi}^{\frac{1}{2}}\Big(\sum_{k=0}^{\infty}P^{k}\Big)U_{\perp 1}D_{\pi}^{-\frac{1}{2}}\Bigg\|_{\ell^{2}(\mathbb{R}^{n})}=\frac{1}{\sigma_{n-1}(L)}
   \end{align}
\end{theorem}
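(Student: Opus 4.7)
The plan is to reduce the stated identity to the operator-inverse characterization $\|L^{-1}\|_{V_{\perp 1}}=1/\sigma_{n-1}(L)$ that was just established, by recognizing the series $\sum_{k=0}^{\infty}P^{k}U_{\perp 1}$ as the Neumann inverse of $L$ on the invariant subspace $V_{\perp 1}$, and then transporting the $\ell^{2}(\pi)$ operator norm to the Euclidean $\ell^{2}(\mathbb{R}^{n})$ operator norm via the isometry $T=D_{\pi}^{1/2}$ from Theorem \ref{thm:similar_isomorphism}.

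First I would use the commutation identity $PU_{\perp 1}=U_{\perp 1}P$ together with $U_{\perp 1}^{2}=U_{\perp 1}$ to rewrite partial sums as $\sum_{k=0}^{N}P^{k}U_{\perp 1}=\sum_{k=0}^{N}(PU_{\perp 1})^{k}$. Ergodicity gives $\rho(PU_{\perp 1})<1$, so by Gelfand's formula $\|(PU_{\perp 1})^{k}\|_{\ell^{2}(\pi)}$ decays geometrically and the Neumann series converges absolutely in operator norm on $\ell^{2}(\pi)$. Next, I would exploit the orthogonal decomposition $\ell^{2}(\pi)=\operatorname{span}(\mathbf{1})\oplus V_{\perp 1}$ induced by $U_{\perp 1}$. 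On $\operatorname{span}(\mathbf{1})$, every summand $P^{k}U_{\perp 1}$ vanishes; on $V_{\perp 1}$, $U_{\perp 1}$ acts as the identity and $V_{\perp 1}$ is $P$-invariant, so the series equals $\sum_{k=0}^{\infty}P^{k}\big|_{V_{\perp 1}}=(I-P)^{-1}\big|_{V_{\perp 1}}=L^{-1}\big|_{V_{\perp 1}}$. Thus, as an operator on $\ell^{2}(\pi)$, $\sum_{k=0}^{\infty}P^{k}U_{\perp 1}$ acts as $0\oplus L^{-1}\big|_{V_{\perp 1}}$; because the two summands are orthogonally decoupled, the operator norm on the whole space equals the norm of the nonzero block, and the preceding theorem gives $\|\sum_{k=0}^{\infty}P^{k}U_{\perp 1}\|_{\ell^{2}(\pi)}=\|L^{-1}\|_{V_{\perp 1}}=1/\sigma_{n-1}(L)$.

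Finally I would invoke Theorem \ref{thm:similar_isomorphism}: the map $T=D_{\pi}^{1/2}$ is an isometry from $\ell^{2}(\pi)$ onto $\ell^{2}(\mathbb{R}^{n})$, hence for any bounded operator $A$ on $\ell^{2}(\pi)$ one has $\|A\|_{\ell^{2}(\pi)}=\|D_{\pi}^{1/2}AD_{\pi}^{-1/2}\|_{\ell^{2}(\mathbb{R}^{n})}$. Taking $A=\sum_{k=0}^{\infty}P^{k}U_{\perp 1}$ and using linearity of conjugation (together with continuity of the isometry under the absolutely convergent series) yields the claimed Euclidean expression on the left-hand side of the identity.

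The only real subtlety, and the step that needs explicit justification, is the apparent tension between the divergent formal series $\sum_{k=0}^{\infty}P^{k}$ (since $P^{k}\to\mathbf{1}\pi^{\top}$ under ergodicity) and the convergent object $\sum_{k=0}^{\infty}P^{k}U_{\perp 1}$; this is exactly resolved by the commutation/idempotency identities, which translate the whole analysis onto the stable operator $PU_{\perp 1}$ whose spectral radius is strictly less than one. Once this is in place, everything else is routine bookkeeping with the orthogonal decomposition and the isometry $T$.
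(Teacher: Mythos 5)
Your proposal is correct and follows essentially the same route as the paper: identify $\sum_{k=0}^{\infty}P^{k}U_{\perp 1}$ with $L^{-1}$ on $V_{\perp 1}$ via the Neumann series of $PU_{\perp 1}$, invoke the preceding theorem $\|L^{-1}\|_{V_{\perp 1}}=1/\sigma_{n-1}(L)$, and transport the norm through the isometry $T=D_{\pi}^{1/2}$. One small slip: the partial sums $\sum_{k=0}^{N}P^{k}U_{\perp 1}$ and $\sum_{k=0}^{N}(PU_{\perp 1})^{k}$ differ by $I-U_{\perp 1}=\mathbf{1}\pi^{\top}$ in the $k=0$ term (the identity $(PU_{\perp 1})^{k}=P^{k}U_{\perp 1}$ holds only for $k\geq 1$), but this is immaterial since the two operators agree on $V_{\perp 1}$ and your convergence and block-decomposition arguments only rely on the $k\geq 1$ terms.
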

\begin{proof}
    
Consider the modified generator $L_{\perp 1}:=I-PU_{\perp 1}$, as $\rho(PU_{\perp 1})<1$ inverse exists and is given by Neumann Series:
\begin{align}
    & \nonumber L^{-1}_{\perp 1}=(I-PU_{\perp 1})^{-1}=\sum_{k=0}^{\infty}(PU_{\perp 1})^{k}, \hspace{10pt} \textit{easy to verify that} \\ & L^{-1}_{\perp 1}(I-PU_{\perp 1})  = (I+\sum_{k=1}^{\infty}P^{k}U_{\perp 1})(I-PU_{\perp 1})=I-PU_{\perp 1}+\sum_{k=1}^{\infty}P^{k}U_{\perp 1}-\sum_{k=2}^{\infty}P^{k}U_{\perp 1}=I
\end{align}  
Notice that $L=L_{\perp 1}$ on $V_{\perp 1}$, and we can define on $V_{\perp 1}$, inverse of the generator $L^{-1}=I+\sum_{k=1}^{\infty}P^{k}U_{\perp 1}$. Also notice that $\|L^{-1}\|_{V_{\perp 1}}=\sup_{f \in V_{\perp 1}\cap\|f\|_{\ell^{2}(\pi)}=1}\|(I+\sum_{k=1}^{\infty}P^{k}U_{\perp 1})f\|_{\ell^{2}(\pi)}=\sup_{f:\|f\|_{\ell^{2}(\pi)}=1}\|\sum_{k=0}^{\infty}P^{k}U_{\perp 1}f\|_{\ell^{2}(\pi)}=:\|\sum_{k=0}^{\infty}P^{k}U_{\perp 1}\|_{\ell^{2}(\pi)}$ and the claim follows after including similarity transformations. \end{proof}

\begin{table}[h!]
\centering
\begin{tabular}{|l|c|}
\hline
\textbf{Functional Form} $\ell^{2}(\pi)$ & \textbf{Matrix Representation} \\ 
\hline
$P^{*}$ & $D_{\pi}^{-1} P^{T} D_{\pi}$ \\ 
\hline
$\sigma_{2}(P^{k})=\big\|(PU_{\perp 1})^{k}\big\|_{\ell^{2}(\pi)} $ & $\|D_{\pi}^{\frac{1}{2}}(PU_{\perp 1})^{k}D_{\pi}^{-\frac{1}{2}}\|_{\ell^{2}(\mathbb{R}^n)}$ \\ 
\hline
$\rho(PU_{\perp 1}) $ & $\rho(D_{\pi}^{\frac{1}{2}}PU_{\perp 1}D_{\pi}^{-\frac{1}{2}})$ \\
\hline
$\sigma_{j}(P^{k}), j \geq 2$ & $\lambda_{j}\Big(\big(D_{\pi}^{\frac{1}{2}}P^{k}D_{\pi}^{-\frac{1}{2}}\big)\big(D_{\pi}^{\frac{1}{2}}P^{k}D_{\pi}^{-\frac{1}{2}}\big)^{T}\Big)$ \\
\hline
$\frac{1}{\sigma_{n-1}(L)}$ & $ \bigg\|D_{\pi}^{\frac{1}{2}}\Big(\sum_{k=0}^{\infty}P^{k}\Big)U_{\perp 1}D_{\pi}^{-\frac{1}{2}}\bigg\|_{\ell^{2}(\mathbb{R}^{n})}$ \\
\hline
\end{tabular}
\caption{Functional Form v.s. Matrix representations.}
\label{tab:func_vs_mat}
\end{table}

\section{Case Studies}
\label{sec:case_studies}
\paragraph{Three state non-reversible Markov Chain}
\begin{remark}
\label{rmk:three_state_non_rev}
Consider the Example 4.5.1 of three-state non-reversible Markov Chain given in, \cite{choi2017analysis} with $p(1,2)=1, p(2,3)=1, p(3,1)=\tfrac{1}{2}$ and $p(3,2)=\tfrac{1}{2}$,  
we provide the sharpest convergence bound:
\begin{align}
    \|p^{k}-\pi\|_{TV} \leq O\bigg(\sqrt{\tfrac{1}{2}}^{k} \hspace{2pt} \bigg),
\end{align}
which is much tighter than the bound from \cite{paulin2015concentration} with constant $\sqrt{0.75}$ and \cite{choi2017analysis} with constant $0.849$.
\end{remark}
\begin{proof}
Stationary distribution corresponding to the transition kernel $\pi=[\tfrac{1}{5},\tfrac{2}{5},\tfrac{2}{5}]$ so $\pi_{\min}=\tfrac{1}{5},\tfrac{1-\pi_{\min}}{\pi_{\min}}=4$ and :
\[
P = \begin{bmatrix} 0 & 1 & 0 \\ 0 & 0 & 1 \\ \tfrac{1}{2} & \tfrac{1}{2} & 0 \end{bmatrix}, \quad
U_{\perp 1} = \begin{bmatrix} \tfrac{4}{5} & -\tfrac{2}{5} & -\tfrac{2}{5} \\ -\tfrac{1}{5} & \tfrac{3}{5} & -\tfrac{2}{5} \\ -\tfrac{1}{5} & -\tfrac{2}{5} & \tfrac{3}{5} \end{bmatrix}, \quad
D_{\pi}^{\frac{1}{2}}PU_{\perp 1}D_{\pi}^{-\frac{1}{2}}= \begin{bmatrix} -\tfrac{1}{5} & \tfrac{3\sqrt{2}}{10} & -\tfrac{\sqrt{2}}{5} \\ -\tfrac{\sqrt{2}}{5} & -\tfrac{2}{5} & \tfrac{3}{5} \\ \tfrac{3\sqrt{2}}{10} & \tfrac{1}{10} & -\tfrac{2}{5} \end{bmatrix}  
, \|D_{\pi}^{\frac{1}{2}}PU_{\perp 1}D_{\pi}^{-\frac{1}{2}}\|_{\ell^{2}(\mathbb{R}^{n})}=1,\]
although does not have any \emph{spectral gap} but $\mathrm{spec}(D_{\pi}^{\frac{1}{2}}PU_{\perp 1}D_{\pi}^{-\frac{1}{2}})=[0,\tfrac{1}{2}+\tfrac{j}{2},\tfrac{1}{2}-\tfrac{j}{2}]$(where $j:=\sqrt{-1}$) which means that spectral radius $\rho(D_{\pi}^{\frac{1}{2}}PU_{\perp 1}D_{\pi}^{-\frac{1}{2}})=\sqrt{\tfrac{1}{2}}$, so  asymptotically $\|P^{k}U_{\perp 1}\|_{\ell^{2}(\pi)}=\tfrac{1}{\sqrt{2}^{k}}$ and consequently $\chi^{2}(k) \leq \tfrac{4}{2^{k}}$. As an application of cauchy-schwarz we can conclude that $\|p^{k}-\pi\|_{TV} \leq \frac{1}{2}\sqrt{\chi^{2}(k)}\leq \sqrt{\tfrac{1}{2}}^{k}$.   
\end{proof}
\paragraph{Diaconis Holmes and Neal, Lifted/Momentum based sampler \cite{diaconis2000analysis}} 
\label{para:diac_algo}
 Let $\pi(x)$ be a strictly positive distribution on $x \in \{1,2,\ldots,n\}$, introduce a momentum variable $z \in (-1,+1)$: associated stationary measure on the lifted space is $\hat{\pi}(z,x):=\frac{\pi(x)}{2}$(ensures that marginal along $x$ is same as $\pi(x)$) and the sampling method proposed is as follows :
    \begin{enumerate}
        \item From $(z,x)$, try to move to $(-z,x+z)$ via standard Metropolis step using acceptance probability
        \begin{equation}
            \nonumber \alpha_{(z,x)\rightarrow(-z,x+z)} = \min \Big\{1,\frac{\hat{\pi}(-z,x+z)}{\hat{\pi}(z,x)}\Big\}=\min\Big\{1,\frac{\pi(x+z)}{\pi(x)}\Big\}.
        \end{equation}
        if $(x+z) \not \in \{1,2,\ldots,n\}$ then set $\alpha_{(z,x)\rightarrow(-z,x+z)}=0$. 
        \begin{align}
            & \nonumber (z',x') = (-z,x+z) \hspace{5pt} \textit{w.p.} \hspace{5pt} \alpha_{(z,x)\rightarrow(-z,x+z)} \\ & \nonumber (z',x')=(z,x)  \hspace{5pt} \textit{w.p.} \hspace{5pt} 1-\alpha_{(z,x)\rightarrow(-z,x+z)}
        \end{align}
        \item With probability $1-\theta$, the chain moves to $(-z',x')$ and w.p $\theta$ stays at $(z',x')$.   
    \end{enumerate}
Although our matrix based framework, when combined with strong compute capabilities can explicitly compute distance to stationarity for all finite times $k$ and reveal explicit dependence on dimension $n$, using python library like \emph{Sympy}. However, even with minimalistic computations we can derive explicit dimensional dependencies for $\chi^{2}$ convergence of momentum based sampler. Computational framework is as follows:

\begin{algorithm}[H]
\caption{Dimensional dependence analysis for $\chi^2$ convergence of momentum-based sampler (Diagonalizable dynamics)}
\label{alg:dim_dependence}
\KwIn{Dimension range $n \in \{50, 51, \ldots, 150\}$ and target distribution $\pi$ on $n$ points}
\KwOut{Estimated parameters $c$, $\alpha$, $C$, $\beta$, and the $\chi^2$ convergence bound}

\For{$n \leftarrow 50$ \KwTo $150$}{
    Construct transition kernel 
    $P \in \mathbb{R}^{2n \times 2n}$ corresponding to the lifted sampler (see \ref{para:diac_algo})\;

    \tcp{Compute the smallest stationary probability (omit for uniform $\pi$)}
    $\pi_{\min}(n) \leftarrow \min_i \pi(i)$\;

    \tcp{Form the lifted stationary distribution over $2n$ states}
    $\hat{\pi} \leftarrow [\tfrac{\pi_{1}}{2}, \tfrac{\pi_{1}}{2}, \tfrac{\pi_{2}}{2}, \tfrac{\pi_{2}}{2}, \ldots, \tfrac{\pi_{n}}{2}, \tfrac{\pi_{n}}{2}]$\;

    \tcp{Construct diagonal matrices of $\sqrt{\hat{\pi}}$ and its inverse}
    $D_{\hat{\pi}}^{1/2} \leftarrow \operatorname{diag}(\sqrt{\hat{\pi}_1}, \ldots, \sqrt{\hat{\pi}_{2n}})$\;
    $D_{\hat{\pi}}^{-1/2} \leftarrow \operatorname{diag}(1/\sqrt{\hat{\pi}_1}, \ldots, 1/\sqrt{\hat{\pi}_{2n}})$\;

    \tcp{Construct projection matrix for the subspace orthogonal to the space of constant functions}
    $U_{\perp 1} \leftarrow I_{2n} - \mathbf{1}\hat{\pi}^{\top}$\;

    \tcp{Compute matrix similar to $\ell^{2}(\pi)$ operator $PU_{\perp 1}$}
    $M(n) \leftarrow D_{\hat{\pi}}^{1/2} P U_{\perp 1} D_{\hat{\pi}}^{-1/2}$\;

    \tcp{Compute the spectral radius of $M(n)$}
    $\rho(n) \leftarrow \rho(M(n))$\;
}

\tcp{Fit dimension-dependence of spectral gap}
Transform: $y_1(n) \leftarrow \log(1 - \rho(n))$, $x(n) \leftarrow \log(n)$\;
Perform linear regression: $y_1(n) = \log(c) - \alpha \cdot x(n)$\;

\tcp{Fit dimension-dependence of minimal stationary probability (if non-uniform)}
Transform: $y_2(n) \leftarrow \log(\pi_{\min}(n))$\;
Perform linear regression: $y_2(n) = \log(C) + \beta \cdot x(n)$\;

\tcp{Compute the resulting $\chi^2$ convergence bound}
$\chi^{2}(k) \leq \left(\frac{2}{C n^{\beta}} - 1\right) \left(1 - \frac{c}{n^{\alpha}}\right)^{2k}$\;

\Return $c$, $\alpha$, $C$, $\beta$, and the convergence bound\;
\end{algorithm}
Based on the preceding algorithm we conclude: 
\begin{remark}    
When desired distribution is uniform i.e., $\pi(x)=\frac{1}{n}$ for $x\in \{1,2,\ldots,n\}$, using the momentum based sampler with $\theta=\frac{1}{n}$ then our analysis show that for sufficiently large $k$
\begin{align}
    \chi^{2}(k) \leq (2n-1)(1-\tfrac{1}{n})^{2k}
\end{align}
which is consistent with Theorem 2 of \cite{diaconis2000analysis} i.e., $n\log{n}$ steps suffices for $\chi^2$ convergence as opposed to $n^{2}$ steps required by Metropolis-Hasting algorithm. Due to uniform nature of stationary distribution it is     
\end{remark}

\begin{remark}
    Momentum based sampler for $\pi(x)\propto 2|x-\frac{n}{2}|+2$ is diffusive, with flipping probability $\theta=\frac{1}{n}$, $n^{1.969}\log{n^{1.956}}$ iterations are sufficient for $\chi^{2}$ convergence. See figures \ref{fig:even_v}, \ref{fig:odd_v} for dimensional dependence of $\rho(PU_{\perp 1})$ and table \ref{tab:asymp_conv_rates} for exact estimates. We have verified that all the eigenvalues are distinct, hence dynamics are diagonalizable and $\chi^{2}$ bounds are correct modulo condition number of similarity transform.     
\end{remark}
\begin{figure}[h!]
\centering
\begin{subfigure}{\textwidth}
    \centering
    \includegraphics[height=7cm]{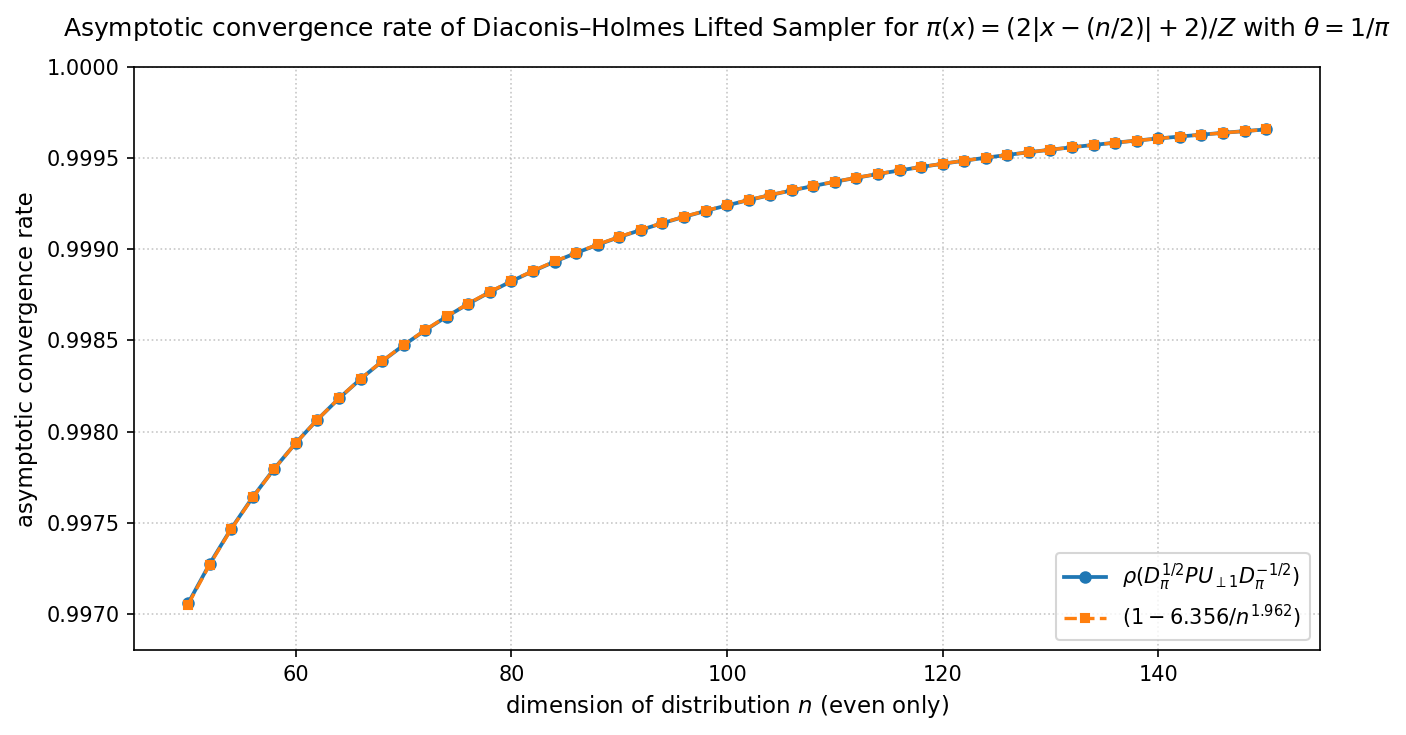}
    \caption{Spectral radius of projected transition matrix v.s dimension $n$(even values)}
    \label{fig:even_v}
\end{subfigure}

\vspace{0.2cm} 

\begin{subfigure}{\textwidth}
    \centering
    \includegraphics[height=7cm]{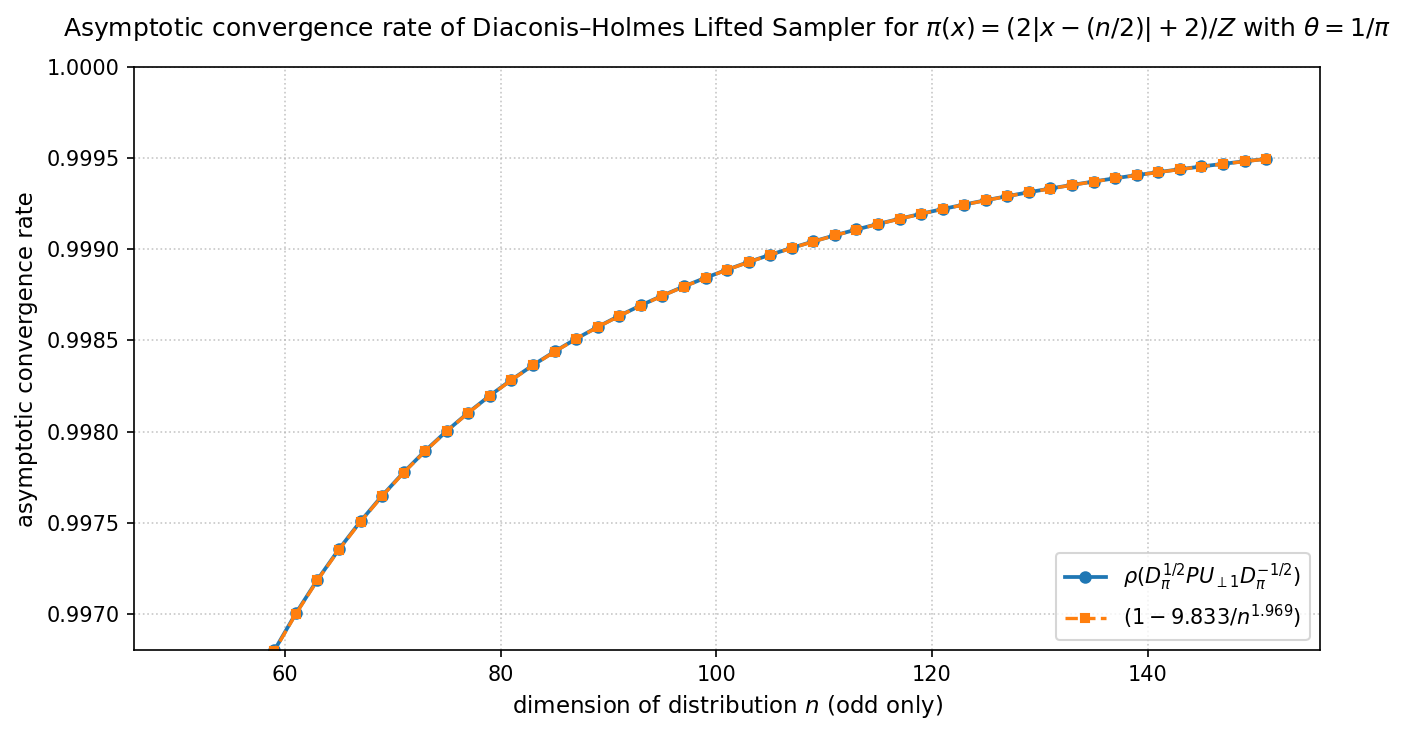}
    \caption{Spectral radius of projected transition matrix v.s dimension $n$(odd values)}
    \label{fig:odd_v}
\end{subfigure}
\caption{Asymptotic convergence rate of Diaconis-Holmes Lifted Sampler for V-shaped distribution}
\label{fig:convergence_rates}
\end{figure}

\bibliographystyle{imsart-nameyear}  
\bibliography{aap-sample}
\end{document}